\definecolor{Red}{cmyk}{0,1,1,0}
\definecolor{Blue}{cmyk}{1,1,0,0}
\newtheorem*{rep@theorem}{\rep@title}
\newcommand{\newreptheorem}[2]{%
	\newenvironment{rep#1}[1]{%
		\def\rep@title{#2 \ref{##1}}%
		\begin{rep@theorem}}%
		{\end{rep@theorem}}}
\theoremstyle{plain}
\newtheorem{theorem}{Theorem}[section]
\newtheorem{corollary}[theorem]{Corollary}
\newtheorem{proposition}[theorem]{Proposition}
\theoremstyle{definition}
\newtheorem{definition}[theorem]{Definition}
\newtheorem{remark}[theorem]{Remark}
\title{The Double Transpose of the Ruelle Operator}
\author[L. Cioletti]{L. Cioletti}
\address{Department of Mathematics, Universidade de Bras\'ilia, 70910-900, Bras\'ilia, Brazil}
\email{cioletti@mat.unb.br}
\author[A. van Enter]{A. van Enter}
\address{Bernoulli Instituut, Rijksuniversiteit Groningen, Nijenborgh 9, Groningen, 9747AG, The Netherlands}
\email{aenter@phys.rug.nl}
\author[R. Ruviaro]{R. Ruviaro}
\address{Department of Mathematics, Universidade de Bras\'ilia, 70910-900, Bras\'ilia, Brazil}
\email{ruviaro@mat.unb.br}
\thanks{This work was partially supported by the Coordena\c c\~ao de
	Aperfei\c coamento de Pessoal de N\'ivel Superior - Brasil (CAPES) - Finance Code 001.
	L. Cioletti and R. Ruviaro acknowledge financial support by FAP-DF. L. Cioletti
	is supported by CNPq through project PQ 313217/2018-1. }
\subjclass[2020]{Primary 37D35, 28Dxx; Secondary 60J05, 60F05}
\keywords{Thermodynamic Formalism, Ergodic Theory, Ruelle operator, Double Transpose, Eigenfunctions, Equilibrium states.}
\date{}
\begin{document}
\maketitle

\begin{abstract}
	In this paper we study the double transpose of the $L^1(X,\mathscr{B}(X),\nu)$-extensions of the Ruelle transfer 
	ope\-ra\-tor $\mathscr{L}_{f}$ associated to a general real continuous potential $f\in C(X)$,
	where $X=E^{\mathbb{N}}$, the alphabet $E$ is any compact metric space and $\nu$ is a maximal eigenmeasure. 
	For this operator, denoted by $\mathbb{L}^{**}_{f}$, we prove the existence of some non-negative eigenfunction, in the Banach lattice sense,
	associated to $\rho(\mathscr{L}_{f})$, the spectral radius of the Ruelle operator acting on $C(X)$. 
	As an application, we obtain a sufficient condition ensuring that 
	the natural extension of the Ruelle operator to 
	$L^1(X,\mathscr{B}(X),\nu)$ 
	has an eigenfunction associated to $\rho(\mathscr{L}_{f})$. 
	These eigenfunctions agree with the usual maximal eigenfunctions,  
	when the potential $f$  belongs to the  H\"older, Walters or Bowen class. 
	We also construct solutions to the classical 
	and generalized variational problem, 
	using the eigenvector constructed here.
\end{abstract}

\section{Introduction}

The Ruelle transfer operator, or simply the Ruelle operator, $\mathscr{L}_{f}, $ 
has its roots in the transfer matrix method introduced 
by Kramers and Wannier \cite{MR0004803} and (independently) by Montroll \cite{montroll1941}, 
to study the famous Ising model. 
This operator, acting on an infinite-dimensional vector space, 
was introduced in 1968 by David Ruelle 
\cite{MR0234697} to give a rigorous mathematical description of a relation between 
local and  global properties of a one-dimensional system 
composed of infinitely many particles, subject  
to an infinite-range potential. In particular, under an 
appropriate decay condition on the interaction (a local condition), 
uniqueness of the Gibbs measure (a global property) was proved. 
The Ruelle operator is one of the fundamental tools in 
Ergodic Theory/Thermodynamic Formalism, and one of the most important
results about this operator is the so-called Ruelle-Perron-Frobenius
theorem. Among other things,
the Ruelle-Perron-Frobenius theorem generalizes 
the classical Perron-Frobenius theorem for matrices 
to a class of positive operators acting on a suitable infinite-dimensional real vector space.

With the advent of the Markov partitions due to 
Adler and Weiss \cite{MR0212156} 
(for continuous ergodic automorphisms of the two-dimensional torus), 
Sinai \cite{MR0233038} (for Anosov diffeomorphisms)
and Bowen \cite{MR0277003} (general case),
remarkable applications of this operator to hyperbolic dynamical systems 
on compact manifolds were further obtained by Ruelle, Sinai and Bowen, 
see \cite{MR0387539,MR0234697,MR0399421}.
Since its creation, this operator has remained a major tool, which has had a great influence in many fields
of pure and applied mathematics. 
In particular, it has been  a powerful tool to study topological dynamics, 
invariant measures for Anosov flows, statistical mechanics in one dimension,
meromorphy of the Selberg and Ruelle dynamical zeta functions, multifractal analysis, 
Lyapunov exponents for product of random matrices, 
conformal dynamics in one dimension and fractal dimensions of horseshoes, just to name a few. Regarding these topics we refer the reader to  
\cite{MR3024807,MR556580,MR2423393,MR1143171,MR742227,MR1085356,MR2651384,MR1920859,MR0399421} 
and references therein.

The spectral analysis of these transfer operators is deeply connected 
with fundamental problems in Ergodic Theory,  
and Classical and Quantum Statistical Mechanics
on the one-dimensional lattice. For example, the maximal eigendata
(eigenvalue, eigenfunction, eigenmeasure and so on)
of the Ruelle transfer operator can be used to compute  
and determine uniqueness of the solutions 
of a central problem in Thermodynamic Formalism, 
introduced by Ruelle \cite{MR0217610} and Walters \cite{MR0390180},
which is  a variational problem of the following form:
\begin{align}\label{variational-problem}
	\sup_{\mu\in \mathscr{M}_{\sigma}(X)}
	\{h_{\mu}(\sigma) + \int_{X} f\, \text{d}\mu\},
\end{align}
where $h_{\mu}(\sigma)$ is the measure-theoretical  (or Kolmogorov-Sinai) entropy of $\mu$
and $\mathscr{M}_{\sigma}(X)$ is the
set of all $\sigma$-invariant Borel probability measures on $X$,
see also \cite{MR1793194,MR1618769,MR0404584,MR1085356,MR648108}.

Before presenting the precise definition of the Ruelle operator, 
we need to introduce some more notation.
Let $(E,d_{E})$ denote a general compact metric space which is sometimes 
called the state space, and fix a Borel probability measure 
$p:\mathscr{B}(E)\to [0,1]$ defined on $E$, having full support.
This condition will be important later, when we will talk about the extension of the Ruelle
operator to $L^1(\nu)\equiv L^1(X,\mathscr{B}(X),\nu)$.
We refer to this measure $p$ as the {\it a priori} measure. 
Consider the infinite product space (on the half-line) $X=E^{\mathbb{N}}$, 
endowed with any metric $d:X\times X\to [0,\infty)$ inducing the product topology,
and let $\sigma:X\to X$ be the left shift map. As usual we write 
$C(X)\equiv C(X,\mathbb{R})$ to 
denote the space of all real continuous
functions defined on $X$ and always assume that it is endowed with its standard
norm $\|\cdot\|_{\infty}$.  

Finally, for a fixed potential $f\in C(X)$, we define the 
Ruelle operator $\mathscr{L}_{f}:C(X)\to C(X)$ 
as being the linear operator that sends a continuous function 
$\varphi$ to another continuous function $\mathscr{L}_{f}\varphi$, 
which is given by the following expression 
\begin{align}\label{def-operador-ruelle}
	\mathscr{L}_{f}\varphi(x) = \int_{E} \exp(f(ax))\varphi(ax)\, \text{d}p(a),
	\qquad
	\text{where}\ \ ax\equiv (a,x_1,x_2,\ldots).
\end{align}
The spectral radius of $\mathscr{L}_{f}$ acting
on $(C(X),\|\cdot\|_{\infty})$ is denoted by $\rho(\mathscr{L}_{f})$.

In the sequel we summarize some of the classical results about
the Ruelle operator and its maximal eigendata in both finite and 
compact state space cases.  

Before we  proceed, we present some of the definitions of the most used regularity conditions in Thermodynamic Formalism. 
Recall that a function $\omega:[0,+\infty)\to [0,+\infty)$ is called 
a modulus of continuity, if $\omega$ is a continuous, increasing and concave function, such that
$\omega(0)=0$. 

The $n$-th variation of a function $f:X\to\mathbb{R}$ is defined by
\[
\mathrm{var}_{n}(f) \equiv \sup\{|f(x)-f(y)|: \ x_i=y_i,\ i=1,\ldots,n \}.
\] 
For the Birkhoff sum  we write $S_n(f) = f +f \circ \sigma +.... +f \circ \sigma^n $.

We say that a function $f$ satisfies: 
\begin{enumerate}
	\item the generalized H\"older condition,  if there is a constant $C>0$ and a modulus of continuity $\omega$ such  $|f(x)-f(y)|\leq C \omega(d(x,y))$, for all $x,y\in X$;
	\item the condition of summable variations,  if 
	$\displaystyle\sum_{n\in\mathbb{N}} \mathrm{var}_{n}(f)<+\infty$;
	
	\item Walters' condition, if 
	$\displaystyle\lim_{k\to\infty} \sup_{n\in\mathbb{N}} \mathrm{var}_{n+k}(S_n(f))=0$;
	
	\item Bowen's condition, if there is $k\in\mathbb{N}$ such that 
	$\displaystyle\sup_{n\in\mathbb{N}} \mathrm{var}_{n+k}(S_n(f))<+\infty$.
\end{enumerate}

The problems of finding the maximal positive eigenvalue for $\mathscr{L}_{f}$ 
and its respective eigenfunction and eigenmeasure are already solved if the potential $f$
satisfies one of the four conditions presented above.
For finite state spaces see, for example, 
\cite{MR1793194,MR1841880,MR2423393,MR1618769,MR1085356,MR0234697,MR2129258,MR0466493,MR1783787,MR2342978} 
and for the case of general compact metric state space see, for example, 
the references \cite{MR2864625,CLS20,MR3538412,MR4059795,MR3377291,MR576928}

\bigskip 

Historically, the results on the existence of  eigenfunctions
for the Ruelle operator started with the investigation of Lipschitz and 
H\"older potentials defined over symbol spaces with finite-state space,
see \cite{MR1793194,MR2423393,MR0234697,MR2129258}.
The literature about the Ruelle operator associated to such potentials 
is vast, and it lies at the heart of the most important applications of the 
Ruelle operator in several branches of  pure and applied mathematics.
We also mention here that the investigation of the basic
properties of this operator 
for H\"older potentials is a very important chapter in the 
theory of Thermodynamic Formalism.
In the nineties and the beginning of the two-thousands the 
interest in this operator, defined on shifts with infinite alphabets, increased, motivated 
in part by applications to non-uniformly
hyperbolic dynamical systems, see 
\cite{MR1107025,MR1639451,MR1853808,MR1738951,MR1955261,MR1468111}
and references therein. Simultaneously, investigations about 
this operator associated to potentials 
belonging to more general function spaces, like Walters and 
Bowen spaces, were carried out. 
Nowadays we can say that we have a 
well-developed theory on this subject, 
but in both cases of finite-state space and infinite-state space, 
the problem of determining necessary and sufficient conditions on the 
potential ensuring the existence of positive eigenfunctions for the Ruelle 
operator associated to its spectral radius remains open. 

\bigskip 

When considering potentials $f\in C(X)$ 
living outside the Bowen space, even for finite-state spaces, 
in very few cases the maximal eigendata of the Ruelle
operator can be obtained. The obstacle one faces to perform
the spectral analysis of the operator in this 
case, by using current mathematical technology, 
is not only of a technical nature.
For such potentials a phase transition,   
($|\mathscr{G}^{*}(f)|>1$, see Proposition \ref{prop-exist-auto-medida} for the definition of $\mathscr{G}^{*}(f)$), 
can take place and this introduces a bunch of new difficulties. 
The classical example, in the Thermodynamic Formalism, 
of a continuous potential  where such phenomena can occur is given by 
a Hofbauer-type potential see \cite{MR0435352}. 
Basically, in such generality no general mathematical 
theory exists, and as far as we know, things can only be  
handled on a case-by-case basis.    
Other one-dimensional examples of systems which have phase transitions 
are the Dyson models, 
the Bramson-Kalikow and Berger-Hoffman-Sidoravicius $g$-measure examples, 
and the Fisher-Felderhof renewal-type examples,
see \cite{bhs2017,MR1244665,MR3350377,MR0436850,FF70,MR3928619}.

\medskip

The goal of this paper is to initiate the study of 
the double transpose of some suitable extensions $\mathbb{L}_{f}:L^1(\nu)\to L^1(\nu)$,
where $\nu\in\mathscr{G}^{*}(f)$,  
of the Ruelle transfer operator 
$\mathscr{L}_{f}$, associated to a general real continuous potential $f:X\to\mathbb{R}$, 
and the main results obtained here are
Theorems \ref{teo-autofunc-bidual} and \ref{teo-Rad-Nik-autofuncao}.

\medskip 

The approach taken here is novel, in so far as it focuses on the bidual of either 
$C(X)$ or $L^{1}(\nu)$
and the extension of the Ruelle operator to these spaces. 
It allows us to find a new sufficient condition to 
solve the problem of existence of maximal 
eigenfunctions for continuous potentials having low regularity properties. 

\medskip

As mentioned above, one of the main ideas of this paper 
is to carefully study the double transpose $\mathbb{L}_{f}^{**}$, 
where $\mathbb{L}_{f}:L^1(\nu)\to L^1(\nu)$ is an extension   
of the classical Ruelle operator $\mathscr{L}_{f}$. 
We prove that $\rho(\mathscr{L}_{f})$ 
is an eigenvalue of $\mathbb{L}_{f}^{**}$ 
and has associated to it a non-negative (in the Banach lattice sense) 
eigenvector $\xi_{f}$, for any continuous potential $f$ in $C(X)$. 
Afterward, we show how we can 
use this eigenvector $\xi_{f}$ to construct equilibrium states, under mild conditions.

\medskip

When working with potentials with very low regularity properties,  
to prove the existence of non-negative eigenfunctions for 
$\mathbb{L}_{f}$ is in general a hard task, 
see \cite{JLMS:JLMS12031,MR3350377,MR0435352}.

Here we work on product spaces of the form $X=E^{\mathbb{N}}$,
nevertheless the problems considered here 
are of more general interest to the Thermodynamic Formalism on various sorts
of compact subspaces as well. 
We restrict ourselves to product spaces for the sake of simplicity.
The statements of our main theorems and its respective proofs 
are easily adapted to compact subshifts, 
since the techniques employed here are based on the general 
theory of linear operators. We shall remark that our approach
does not need the expansivity hypothesis, 
sometimes considered in the literature on Thermodynamic Formalism.

\subsection{Directions for future research}

One of our main results, Theorem \ref{teo-medida-gibbs-invariante}, 
states that for any continuous potential $f$, and $\nu\in\mathscr{G}^{*}(f)$, there is a positive 
element $\xi_{f}\in L^{1}(\nu)^{**}$ which determines a $\sigma$-invariant Borel finitely 
additive measure given by 
$A\longmapsto \xi_{f}(\ell_{A})\equiv\mu(A)$, where $\ell_{A}$ denotes the bounded
linear functional on $L^1(\nu)$ given by $\ell_{A}(\varphi)=\int_{X}1_{A}\varphi\, d\nu$.

By Theorem 3.1 in \cite{To20} we have 
\[
L^{1}(\nu)^{**}
\cong
\left\{
\gamma:\mathscr{B}(X)\to\mathbb{R}: 
\gamma\in \mathrm{ba}(\mathscr{B}(X))\ \text{satifying} \ \nu(Z)\!=\!0\! \Longrightarrow \gamma(Z)=0
\right\},  
\]
where $\mathrm{ba}(\mathscr{B}(X))$ stands for the space of all Borel bounded finitely additive
signed measures. By Theorem 3.1 in \cite{To20}, we can identify the finitely additive measure $\mu$, constructed above, 
with a bounded linear functional $F_{\mu}:L^{\infty}(\nu)\to \mathbb{R}$. The restriction of $F_{\mu}$ to $C(X)$
determines a countably additive $\sigma$-invariant measure, because of the Riesz-Markov Theorem.
It would be very interesting to know whether such a Borel 
probability measure is actually an equilibrium state associated to $f$,
and how to determine the support of  such a measure in 
terms of its Yosida-Hewitt decomposition $\mu=\mu_{a}+\mu_{c}$,
see \cite{To20,MR0045194} for details on this decomposition.

Another interesting question related to Theorem \ref{teo-medida-gibbs-invariante} is the following.
Consider $\nu$ the barycenter of the convex set $\mathscr{G}^{*}(f)$ and let 
$\mathscr{H} \equiv \{ \xi\in L^1(\nu)^{**} :\ \mathbb{L}_{f}^{**}\xi\!=\!\rho(\mathscr{L}_{f})\xi \}$.
Some of the results in \cite{CMRS-2020} suggest that $\mathrm{dim}_{\mathbb{R}}\, \mathscr{H}\!=\! \#\mathrm{ex}(\mathscr{G}^{*}(f))$ (the set of extreme points in $\mathscr{G}^{*}(f)$). 
If such relation indeed holds, it has as a consequence 
a new criterion for the existence of phase transitions for one-dimensional one-sided lattice
statistical mechanical spin systems, which in turn would reduce this problem to a problem of 
spectral analysis of Markov operators. 

Yet another question, which seems to be more puzzling, is the following. When 
the Yosida-Hewitt decomposition of $\mu$ is trivial, meaning that $\mu=\mu_{a}$
is purely finitely additive, can any statistics along typical orbits of observables 
be obtained, similarly to the countably additive setting? Are there any relations 
between  this decomposition and the existence of sigma-finite shift-invariant measures which 
are not finite?

The connection between $\mathrm{Eq}(f)$, the set of equilibrium states associated to $f$,
and $\mathscr{G}^{*}(f)$ is still mysterious when $f$ has low regularity properties. 
For finite alphabets it is very well known that any probability measure $\nu\in \mathscr{G}^{*}(f)$
is fully supported and the same holds in the metric compact case, see \cite{CMRS-2020}. 
On the other hand, Jenkinson, and Israel and Phelps showed in \cite{MR753071,MR2226487} 
that for every non-empty collection $\mathscr{C}$ of ergodic measures 
which is a closed subset of $\mathscr{M}_{\sigma}(X)$ (in the weak-$*$-topology), 
there is a continuous potential $f$ such that the closure of the convex hull of $\mathscr{C}$
is precisely the set of the equilibrium states for $f$. In particular, if $x\in X$ is such that $\sigma(x)=x$
and $\delta_{x}$ denotes the Dirac delta measure concentrated in $x$, then there is a continuous potential
$f$ such that $\mathrm{Eq}(f)=\{\delta_{x}\}$. On the other hand, any $\nu\in \mathscr{G}^{*}(f)$
is such that $\mathrm{supp}(\nu)=X$ and so $\nu\perp \delta_{x}$ and therefore they are not linked, as in the classical 
cases (H\"older, Walters and Bowen), by an eigenfunction. Could they be linked by the generalized 
eigenfunctions $\xi\in \mathscr{H}$? The example in the last section suggests that the answer to this 
question is affirmative, at least in the particular cases where the set of equilibrium states is a singleton
and the link is provided by the restriction of $F_{\mu}$ as described above. 

\subsection{Organization of the paper}
The paper is organized as follows. 
In Section \ref{sec-pressao} we study the pressure functional for 
general continuous potentials defined on $E^{\mathbb{N}}$, where 
$E$ is any compact metric space. In such a general setting we prove 
the existence of an eigenmeasure associated to $\rho(\mathscr{L}_{f})$, the spectral radius 
of the Ruelle operator acting on $C(X)$. 
In particular, we prove that the set 
$\mathscr{G}^{*}(f)\equiv \{\nu\in\mathscr{M}_{1}(X): 
\mathscr{L}_{f}^{*}\nu=\rho(\mathscr{L}_{f})\nu \}$ is always non-empty, for any continuous potential $f$.
In  Section \ref{sec-duplo-adjunto} 
we study the double transpose of the Ruelle operator, associated to 
a general continuous potential, acting on both the spaces $C(X)$ 
and $L^1(\nu)$, where $\nu\in \mathscr{G}^{*}(f)$. 
For any continuous potential $f$, the existence of a 
positive eigenvector $\xi_{f}$ for the double transpose of the 
Ruelle operator acting on $L^1(\nu)$, associated to $\rho(\mathscr{L}_{f})$,
is established. 
In Section \ref{sec-inv-measures} 
we show how to construct for each eigenmeasure $\nu \in \mathscr{G}^{*}(f)$ a 
shift-invariant finitely additive measure $\mu$ by using the eigenvector $\xi_{f}$. 
When this finitely additive measure is a probability  
measure (countably additive) we show that $\mu\ll \nu$, and consequently
how to construct an eigenfunction for the natural 
extension of the Ruelle operator to  $L^1(\nu)$, associated to $\rho(\mathscr{L}_{f})$. 
In Section \ref{sec-Man-Pomeau}, we present an example where the shift-invariant measure induced by 
the eigenvector $\xi_{f}$ is a finitely additive but not countably additive measure.
In  Section \ref{sec-prob-variacional} we study a generalization 
of the classical variational problem \eqref{variational-problem} 
for continuous potentials defined on $E^{\mathbb{N}}$, where $E$
is any compact metric space. 
We introduce a generalization of the Kolmogorov-Sinai entropy 
in order to obtain a meaningful definition of entropy for 
shifts in symbol spaces having uncountable alphabets. 
The problem \eqref{variational-problem} is then reformulated, 
in a natural way, in this setting and next we show how to solve 
this variational problem using the shift-invariant probability measures 
constructed in Section \ref{sec-inv-measures}.
In Section \ref{sec-conc-remarks}, we provide some concluding remarks.

\section{Pressure Functional and Eigenmeasures}
\label{sec-pressao}

In this section we prove the existence of an eigenmeasure for the transpose of the 
Ruelle operator, associated to $\rho(\mathscr{L}_{f})$.
To this end we recall that $C(X)^*$ is isometrically isomorphic 
to $\mathscr{M}_s(X)$, so we can think of
$\mathscr{L}_{f}^{*}$ as a bounded linear operator 
acting on $\mathscr{M}_s(X)$.
As a consequence of the Riesz-Markov Theorem, it is easy to see that 
the transposed operator sends $\nu\mapsto \mathscr{L}_{f}^*\nu$, where
$\mathscr{L}_{f}^*\nu$ is the unique signed Radon measure satisfying
\[
\int_{X} \mathscr{L}_{f}\varphi \, \text{d}\nu
=
\int_{X} \varphi \, \text{d}[\mathscr{L}_{f}^*\nu],
\qquad \forall \varphi\in C(X).
\]

\begin{theorem}[Pressure Functional]\label{theo21}
	Let $X=E^{\mathbb{N}}$, where $E$ is a general compact metric space
	and $f\in C(X)$ a continuous potential.  
	Then there is a real number $P(f)$, called the pressure of the potential $f$, such that 
	\begin{align}\label{teo-existencia-pressao}
		\lim_{n\to\infty}\,
		\sup_{x\in X}
		\left| 
		\frac{1}{n} \log \mathscr{L}_{f}^n(1)(x) -P(f)
		\right|
		= 
		0.
	\end{align}
\end{theorem}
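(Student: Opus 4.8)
The plan is to show that the sequence $a_n := \log \sup_{x\in\Omega} \mathscr{L}_f^n(1)(x)$ is subadditive, and then to upgrade the resulting convergence of $\frac{1}{n}a_n$ to the uniform convergence asserted in \eqref{teo-existencia-pressao} by controlling the oscillation of $\frac{1}{n}\log \mathscr{L}_f^n(1)$ over $\Omega$.

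First I would record the basic semigroup and positivity properties of the Ruelle operator that follow directly from \eqref{def-operador-ruelle}: the operator is positive (it maps non-negative functions to non-negative functions), it is monotone, and it satisfies $\mathscr{L}_f^{m+n} = \mathscr{L}_f^m \circ \mathscr{L}_f^n$. From the explicit iterated formula
\[
\mathscr{L}_f^n(\varphi)(x) = \int_{E^n} \exp\!\Big( \textstyle\sum_{j=0}^{n-1} f(\sigma^j(a_1\cdots a_n x)) \Big)\, \varphi(a_1\cdots a_n x)\, \mathrm{d}p(a_1)\cdots \mathrm{d}p(a_n),
\]
together with the fact that $p$ is a probability measure, one gets the two-sided comparison $\exp(-\|f\|_\infty)\,\|\varphi\|_\infty^{-} \le$ (pointwise values) that localizes $\mathscr{L}_f^n(1)$: concretely $e^{-n\|f\|_\infty} \le \mathscr{L}_f^n(1)(x) \le e^{n\|f\|_\infty}$, so all the logarithms are finite and the normalized quantities are bounded. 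Next I would establish submultiplicativity of $M_n := \sup_x \mathscr{L}_f^n(1)(x)$: since $\mathscr{L}_f^n(1) \le M_n \cdot 1$ pointwise, applying the positive operator $\mathscr{L}_f^m$ and using $\mathscr{L}_f^m(1) \le M_m$ gives $\mathscr{L}_f^{m+n}(1) = \mathscr{L}_f^m(\mathscr{L}_f^n(1)) \le M_n \mathscr{L}_f^m(1) \le M_n M_m$, hence $M_{m+n} \le M_m M_n$. Fekete's subadditive lemma applied to $a_n = \log M_n$ then yields that $P(f) := \lim_n \frac{1}{n}\log M_n = \inf_n \frac{1}{n}\log M_n$ exists in $\mathbb{R}$.

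It remains to pass from convergence of $\frac{1}{n}\log M_n = \frac1n \sup_x \log \mathscr{L}_f^n(1)(x)$ to the uniform statement, i.e. to show the infimum over $x$ behaves the same way. The key estimate is a bounded-distortion-free comparison between values of $\mathscr{L}_f^n(1)$ at different points that does not require any regularity of $f$ beyond continuity. The idea is: for $x,y\in\Omega$, writing the integral for $\mathscr{L}_f^n(1)(x)$ over words $a\in E^n$, the word $ax$ and $ay$ agree on the first $n$ coordinates, so $\sigma^j(ax)$ and $\sigma^j(ay)$ for $0\le j\le n-1$ differ only in coordinates beyond position $n-j$; using uniform continuity of $f$ on the compact metric space $\Omega$ one bounds $|\sum_{j=0}^{n-1} f(\sigma^j(ax)) - f(\sigma^j(ay))| \le \sum_{k\ge 1} \omega_f(\operatorname{diam} E \cdot \beta^k)$ for a suitable metric with contraction ratio $\beta<1$ on tails, where $\omega_f$ is the modulus of continuity; this sum is a finite constant $C_f$ independent of $n$. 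Hence $\mathscr{L}_f^n(1)(x) \le e^{C_f}\mathscr{L}_f^n(1)(y)$ for all $x,y$, so $\sup_x \mathscr{L}_f^n(1)(x) \le e^{C_f} \inf_y \mathscr{L}_f^n(1)(y)$. Taking logarithms, dividing by $n$, and letting $n\to\infty$ squeezes $\frac1n \log \mathscr{L}_f^n(1)(x)$ uniformly to the same limit $P(f)$, which is \eqref{teo-existencia-pressao}.

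The main obstacle I anticipate is the distortion estimate in the last paragraph when $E$ is a general (possibly uncountable) compact metric space rather than a finite alphabet: one must choose a compatible metric on $\Omega = E^{\mathbb{N}}$ for which changes in far-out coordinates contract geometrically (e.g. $d_\Omega(x,y) = \sum_k 2^{-k} d(x_k,y_k)$, bounded since $E$ is compact hence of finite diameter), and then argue that although the constant $C_f$ depends on the modulus of continuity of $f$, it is genuinely finite and $n$-independent — this is exactly where uniform continuity of $f$ on the compact space $\Omega$ (Heine–Cantor) is used in an essential way. If one wanted to avoid even this mild argument, an alternative is to invoke the already-established existence of an eigenmeasure $\nu\in\mathcal{G}^*(f)$ with $\mathscr{L}_f^*\nu = \lambda_f\nu$ to pin down $\int \mathscr{L}_f^n(1)\,\mathrm{d}\nu = \lambda_f^n$, identify $P(f) = \log\lambda_f$, and combine with submultiplicativity; but the self-contained subadditivity-plus-distortion route above is cleaner and does not presuppose the eigenmeasure result.
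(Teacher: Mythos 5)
Your strategy (positivity gives submultiplicativity of $M_n=\sup_{x}\mathscr{L}_f^n(1)(x)$, Fekete gives $\frac1n\log M_n\to P(f)$, then control the oscillation of $\log\mathscr{L}_f^n(1)$ over $\Omega$) is reasonable, and the paper itself gives no internal proof of Theorem \ref{theo21} — it delegates to \cite{CL-rcontinuas-2016,MR1143171,2017arXiv170709072S} — so a self-contained argument is welcome. But your key distortion step contains a genuine error. For $x,y\in\Omega$ and a word $a\in E^n$, the points $\sigma^j(ax)$ and $\sigma^j(ay)$ agree in their first $n-j$ coordinates, so with the metric $d_\Omega(u,v)=\sum_k 2^{-k}d(u_k,v_k)$ the correct estimate is
\[
\Big|\sum_{j=0}^{n-1}\big(f(\sigma^j(ax))-f(\sigma^j(ay))\big)\Big|
\;\le\;
\sum_{m=1}^{n}\omega_f\big(2^{-m}\,\mathrm{diam}(E)\big),
\]
a \emph{partial} sum, and your claim that the corresponding infinite series $\sum_{m\ge1}\omega_f(\beta^m\,\mathrm{diam}(E))$ is ``a finite constant $C_f$ independent of $n$'' is false for a general continuous $f$: uniform continuity (Heine--Cantor) only gives $\omega_f(t)\to0$, not summability of $\omega_f$ along geometric scales. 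For instance $\omega_f(t)\asymp 1/\log(1/t)$ makes the series diverge like $\log n$. Summability of the variations at geometric scales is precisely a Dini/Walters-type regularity hypothesis, and the resulting uniform-in-$n$ bounded distortion $\sup_x\mathscr{L}_f^n(1)\le e^{C_f}\inf_x\mathscr{L}_f^n(1)$ is essentially Bowen's condition — exactly the kind of assumption this theorem (and this paper) is meant to avoid.

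The repair is short, because you do not need a uniform constant: it suffices that the oscillation of $\frac1n\log\mathscr{L}_f^n(1)$ is $o(1)$. Setting $C_{f,n}:=\sum_{m=1}^{n}\omega_f(2^{-m}\mathrm{diam}(E))$, the terms form a null sequence, so their Ces\`aro averages tend to zero and $C_{f,n}=o(n)$; then $M_n\le e^{C_{f,n}}\inf_x\mathscr{L}_f^n(1)(x)$, and combining with $\frac1n\log M_n\to P(f)$ (finite, since $e^{-n\|f\|_\infty}\le\mathscr{L}_f^n(1)\le e^{n\|f\|_\infty}$) gives exactly \eqref{teo-existencia-pressao}. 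Your proposed fallback via an eigenmeasure would, by contrast, be circular in the paper's logical order: Proposition \ref{prop-exist-auto-medida} identifies the eigenvalue of the fixed-point measure with $\lambda_f$ by invoking Theorem \ref{theo21} (through Jensen's inequality), and in any case the identity $\int_\Omega\mathscr{L}_f^n(1)\,\mathrm{d}\nu=\lambda_f^n$ alone does not yield uniform convergence of $\frac1n\log\mathscr{L}_f^n(1)$ without the same oscillation control.
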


\begin{proof}
	For finite-state space and H\"older potentials the proof can be found in 
	\cite{MR1793194,MR2423393,MR1085356,MR2129258}.
	For general compact metric spaces and continuous potentials, see 
	\cite{CLS20,MR1143171,2017arXiv170709072S}. 
\end{proof}

\begin{remark}
	Note that the above result does not contradict the existence of metastable 
	states proved by Sewell \cite{MR0468999} since $n^{-1}\log \mathscr{L}_{f}^n(1)(x)$,
	for each $f\in C(X)$, can be written as a finite-volume pressure 
	of a translation invariant interaction $\Phi=(\Phi_{\Lambda})_{\Lambda\Subset \mathbb{N}}$ 
	on the lattice $\mathbb{N}$, satisfying the following regularity condition:
	$\sum_{\Lambda\ni 1}\|\Phi_{\Lambda}\|_{\infty}<+\infty$.
\end{remark}

We remark that a one-sided ($\mathbb{N}$) continuous potential appears to 
provide substantially stronger regularity conditions 
on the associated measures than continuous potential in 
the two-sided ($\mathbb{Z}$) case does.

\bigskip

The next result relates the pressure functional to the logarithm of the
spectral radius for continuous potentials defined over general metric compact symbolic spaces.

\begin{corollary}
	For any $f\in C(X)$ we have $P(f)=\log \rho(\mathscr{L}_{f})$.
\end{corollary}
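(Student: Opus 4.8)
The plan is to show the two quantities $P(f)$ and $\log\lambda_f$ coincide by squeezing $\lambda_f$ between exponential growth rates of $\mathscr{L}_f^n(1)$ computed in the supremum norm. First I would recall that $\lambda_f$ is by definition the spectral radius of $\mathscr{L}_f$ acting on $C(\Omega)$, so by Gelfand's formula $\log\lambda_f = \lim_{n\to\infty}\frac{1}{n}\log\|\mathscr{L}_f^n\|$, where $\|\cdot\|$ denotes the operator norm on $\mathcal{B}(C(\Omega))$. The goal is then to identify $\lim_n \frac1n\log\|\mathscr{L}_f^n\|$ with $\lim_n \frac1n\log\|\mathscr{L}_f^n(1)\|_\infty = P(f)$, the latter equality being exactly the content of Theorem \ref{theo21}.

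The key step is the elementary observation that $\mathscr{L}_f$ is a positive operator, i.e. $\varphi\geq 0$ implies $\mathscr{L}_f\varphi\geq 0$, which is immediate from the defining integral formula \eqref{def-operador-ruelle} since $\exp(f(ax))>0$. Positivity gives the two-sided bound: for any $\varphi\in C(\Omega)$ with $\|\varphi\|_\infty\leq 1$ we have $-1\leq \varphi\leq 1$ pointwise, hence $-\mathscr{L}_f^n(1)\leq \mathscr{L}_f^n\varphi\leq \mathscr{L}_f^n(1)$, so $\|\mathscr{L}_f^n\varphi\|_\infty\leq \|\mathscr{L}_f^n(1)\|_\infty$. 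Taking the supremum over such $\varphi$ yields $\|\mathscr{L}_f^n\|\leq \|\mathscr{L}_f^n(1)\|_\infty$. The reverse inequality $\|\mathscr{L}_f^n\|\geq \|\mathscr{L}_f^n(1)\|_\infty$ is trivial since $1\in C(\Omega)$ has norm $1$. Therefore $\|\mathscr{L}_f^n\| = \|\mathscr{L}_f^n(1)\|_\infty$ for every $n$.

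Combining, $\log\lambda_f = \lim_{n\to\infty}\frac1n\log\|\mathscr{L}_f^n\| = \lim_{n\to\infty}\frac1n\log\|\mathscr{L}_f^n(1)\|_\infty$. On the other hand, Theorem \ref{theo21} asserts precisely that $\frac1n\log\mathscr{L}_f^n(1)(x)$ converges uniformly in $x$ to the constant $P(f)$; in particular $\frac1n\log\|\mathscr{L}_f^n(1)\|_\infty \to P(f)$. Uniqueness of limits then gives $P(f) = \log\lambda_f$, completing the argument.

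I do not anticipate a genuine obstacle here; the only point requiring a small amount of care is making sure the uniform convergence in Theorem \ref{theo21} transfers cleanly to convergence of $\frac1n\log\|\mathscr{L}_f^n(1)\|_\infty$ (one must note $\mathscr{L}_f^n(1)>0$ everywhere, so the logarithm is well-defined and $\log\|\mathscr{L}_f^n(1)\|_\infty = \sup_x \log\mathscr{L}_f^n(1)(x)$, and then the uniform estimate bounds $|\frac1n\log\|\mathscr{L}_f^n(1)\|_\infty - P(f)|$ directly). A secondary subtlety, worth a remark, is that Gelfand's formula holds for the spectral radius in any Banach algebra, so no extra hypotheses on $f$ beyond continuity are needed; this is what makes the identity valid in the full generality of continuous potentials on $E^{\mathbb{N}}$.
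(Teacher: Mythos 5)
Your proof is correct and follows essentially the same route as the paper: Gelfand's formula combined with the positivity-based identity $\|\mathscr{L}_f^n\|=\|\mathscr{L}_f^n(1)\|_\infty$ and the uniform convergence of Theorem \ref{theo21}. The only cosmetic difference is that the paper also records the lower bound $\mathscr{L}_f^n(1)\geq e^{-n\|f\|_\infty}$ to make explicit that $\lambda_f>0$, a point your argument covers implicitly since the limit equals the finite number $P(f)$.
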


\begin{proof}
	The idea is to use Gelfand's Formula for the spectral radius.
	Since $\mathscr{L}_{f}$ is a positive operator,
	$f$ is continuous and $X$ is compact, 
	for each $n\in\mathbb{N}$ we can ensure the existence of 
	some $x_n\in X$ for which 
	\begin{align*}
		\|\mathscr{L}_{f}^{n}\|_{\mathrm{op}}
		\equiv
		\sup_{\|\varphi\|_{\infty}=1} \|\mathscr{L}_{f}^n(\varphi)\|_{\infty}
		=
		\|\mathscr{L}_{f}^n(1)\|_{\infty}
		=
		\mathscr{L}_{f}^n(1)(x_n)
		\geq \exp(-n\|f\|_{\infty}).
	\end{align*}
	The above inequality implies that the spectral radius is non-null. 
	By taking the $n$-th root of both sides above 
	and then the logarithm, we get from Theorem \ref{theo21} 
	that 
	\[
	\log
	\|\mathscr{L}_{f}^{n}\|_{\mathrm{op}}^{\frac{1}{n}}
	=
	\frac{1}{n}\log \mathscr{L}_{f}^n(1)(x_n)
	\xrightarrow{\ n\to\infty\ }
	P(f)
	\geq 
	-\|f\|_{\infty}.
	\] 
	On the other hand, from the boundedness of  $\mathscr{L}_{f}$ and Gelfand's formula, it
	follows that the limit, when $n$ goes to infinity, of the lhs above 
	is precisely the logarithm  of the spectral radius of $\mathscr{L}_f$.
	Therefore $\log\rho(\mathscr{L}_{f})=P(f)$.
\end{proof}

Now we are ready to prove the main result of this section.

\begin{proposition}\label{prop-exist-auto-medida} 
	For any continuous potential $f$ we have
	\[
	\mathscr{G}^{*}(f) 
	\equiv 
	\{
	\nu \in \mathscr{M}_1(X): \mathscr{L}_{f}^*\nu =\rho(\mathscr{L}_{f})\nu 
	\}
	\]
	is non-empty.
\end{proposition}

\begin{proof}
	Note that the mapping
	$\mathscr{M}_1(X)\ni 
	\gamma
	\mapsto 
	(\mathscr{L}_{f}^{*}(\gamma)(X))^{-1}\, \mathscr{L}_{f}^{*}(\gamma) 
	$
	sends $\mathscr{M}_1(X)$ to itself. Since this set is also convex 
	and compact, in the weak-$*$-topology which is Hausdorff when $X$ is metric and compact, 
	it follows from the continuity of $\mathscr{L}_{f}^{*}$ and the Tychonov-Schauder Theorem
	that this mapping has at least one fixed point $\nu$. 
	Note that this fixed point is an 
	eigenmeasure for the transpose of the Ruelle operator, i.e., 
	$\mathscr{L}_{f}^{*}(\nu)=(\mathscr{L}_{f}^{*}(\nu)(X))\, \nu$.
	
	We claim that the following bounds, 
	$$
	\exp(-\|f\|_{\infty})
	\leq 
	\mathscr{L}_{f}^{*}(\nu)(X) 
	\leq 
	\exp(\|f\|_{\infty}),
	$$ 
	hold for any $\nu\in\mathscr{M}_1(X)$, satisfying $\mathscr{L}_{f}^{*}(\nu)=(\mathscr{L}_{f}^{*}(\nu)(X))\, \nu$. 
	In fact, the lower bound holds for any Borel probability measure over
	$X$, since 
	\begin{align*}
		\exp(-\|f\|_{\infty})\leq 
		\int_{X} \mathscr{L}_{f}(1) \, \text{d}\gamma 
		=
		\int_{X} 1 \, \text{d}[\mathscr{L}_{f}^{*}\gamma]
		=
		\mathscr{L}_{f}^{*}(\gamma)(X),
		\qquad \forall \ \gamma\in \mathscr{M}_1(X).
	\end{align*}
	
	For the upper bound it is necessary to use that $\nu$ is an eigenmeasure
	associated to the eigenvalue $\mathscr{L}_{f}^{*}(\nu)(X)$
	and the argument is as follows:
	\begin{align*}
		\exp(\|f\|_{\infty})
		\geq 
		\int_{X} \mathscr{L}_{f}(1) \, \text{d}\nu 
		=
		\int_{X} 1 \, \text{d}[\mathscr{L}_{f}^{*}\nu]
		=
		\mathscr{L}_{f}^{*}(\nu)(X)
		\int_{X} 1 \, \text{d}\nu
		=
		\mathscr{L}_{f}^{*}(\nu)(X).
	\end{align*}
	
	From the claim it follows that 
	\[
	\overline{\rho(\mathscr{L}_{f})} 
	\equiv 
	\sup\{ \mathscr{L}_{f}^{*}(\nu)(X): 
	\mathscr{L}_{f}^{*}(\nu)
	=
	(\mathscr{L}_{f}^{*}(\nu)(X))\, \nu  
	\}
	<
	+\infty.
	\]
	A simple compactness argument shows that there is $\nu\in \mathscr{M}_{1}(X)$ so that 
	$\mathscr{L}_{f}^{*}\nu=\overline{\rho(\mathscr{L}_{f})}\nu$. 
	Indeed, let $(\nu_n)_{n\in\mathbb{N}}$ 
	be a sequence such that 
	$\mathscr{L}_{f}^{*}(\nu_n)(X)\uparrow \overline{\rho(\mathscr{L}_{f})}$, 
	when $n$ goes to infinity. 
	Since $\mathscr{M}_1(X)$ is a compact metric space
	in the weak-$*$-topology, we can assume, up to subsequence convergence, that $\nu_n\rightharpoonup \nu$. This convergence,
	together with the continuity of $\mathscr{L}_{f}^{*}$, provides 
	\[
	\mathscr{L}_{f}^{*}\nu 
	= 
	\lim_{n\to\infty}\mathscr{L}_{f}^{*}\nu_n 
	=
	\lim_{n\to\infty}\mathscr{L}_{f}^{*}(\nu_n)(X)\nu_n
	=
	\overline{\rho(\mathscr{L}_{f})}\, \nu.
	\]
	Therefore the set 
	$
	\{
	\nu \in \mathscr{M}_1(X): 
	\mathscr{L}_{f}^*\nu =\overline{\rho(\mathscr{L}_{f})}\, \nu 
	\}
	\neq 
	\emptyset
	$.
	It remains to show that $\overline{\rho(\mathscr{L}_{f})}$ 
	is the spectral radius of $\mathscr{L}_{f}$.
	In order to prove this statement we first observe that 
	\begin{align*}
		\overline{\rho(\mathscr{L}_{f})}^{\, n}
		=
		\int_{X} \mathscr{L}_f^{n}(1)(x)\, \text{d}\nu 
		\leq 
		\|\mathscr{L}_{f}^{n}\|_{\mathrm{op}}.
	\end{align*}
	and therefore $\overline{\rho(\mathscr{L}_{f})}\leq \rho(\mathscr{L}_{f})$.
	From the uniform convergence provided by  
	Theorem \ref{teo-existencia-pressao} and  Jensen's inequality
	we get
	\begin{align*}
		\log\rho(\mathscr{L}_{f})
		=
		\lim_{n\to\infty} \frac{1}{n} \int_{X} \log\mathscr{L}_{f}^{n}(1)\, \text{d}\nu
		\leq
		\lim_{n\to\infty} \frac{1}{n} \log\int_{X} \mathscr{L}_{f}^{n}(1)\, \text{d}\nu
		=
		\log \overline{\rho(\mathscr{L}_{f})},
	\end{align*}
	thus proving that $\overline{\rho(\mathscr{L}_{f})}=\rho(\mathscr{L}_{f})$.
\end{proof}

\section{The Double Transpose and Its Eigenfunctions}
\label{sec-duplo-adjunto}

In this section we establish some elementary properties of the 
double transpose of the Ruelle operator associated to a 
continuous potential $f$.

By using the isomorphism $C^{**}(X) \backsimeq \mathscr{M}_s(X)^{*} $, 
we can consider the double transpose of $\mathscr{L}_{f}:C(X)\to C(X)$ 
as the unique linear operator 
$\mathscr{L}_{f}^{**}:\mathscr{M}_s(X)^{*}\to \mathscr{M}_s(X)^{*}$ 
sending $\xi \mapsto \mathscr{L}_{f}^{**}( \xi) $,
defined for each $\mu\in \mathscr{M}_s(X)$ by
\[
\mathscr{L}_{f}^{**}( \xi)(\mu)  \equiv  \xi(\mathscr{L}_{f}^{*}\mu). 
\]

By identifying $C(X)$ with the image of the natural map
$J:C(X)\to \mathscr{M}_s(X)^{*}$, defined by
$J(\varphi)(\mu)=\mu(\varphi)$, we can think 
of $\mathscr{L}_{f}^{**}$ as a bounded linear extension 
of the Ruelle operator
\begin{displaymath}
	\xymatrix{
		\mathscr{M}_s(X)^{*}  \ar[r]^{\mathscr{L}_{f}^{**}} 
		& \mathscr{M}_s(X)^{*} 
		\\
		\mathscr{M}_s(X) \ar[u]^{\text{duality}} 
		& \ar[l]_{\mathscr{L}_{f}^{*} }  \mathscr{M}_s(X)\ar[u]_{\text{duality}}
		\\
		C(X) \ar[u]^{\text{duality}}  \ar[r]^{\mathscr{L}_{f}} 
		&  C(X). \ar[u]_{\text{duality}}   }
\end{displaymath}

\medskip
Let $\mathscr{M}_{+}(X)$ denote the set of all finite positive 
Borel measures over $X$. We say that an element $\xi\in \mathscr{M}_s(X)^{*}$ 
is positive if $\xi(\mathscr{M}_{+}(X)) \subset [0,+\infty)$.

Note that, if $h_f$ is a positive continuous eigenfunction of $\mathscr{L}_{f}$
associated to the maximal eigenvalue $\mathscr{L}_{f}$, then 
for any finite signed measure $\mu$ we have 
\begin{align*}
	\mathscr{L}_{f}^{**}(J(h_f))(\mu)  
	&=
	J(h_f)(\mathscr{L}_{f}^{*}(\mu))
	\\
	&=
	\mathscr{L}_{f}^{*}(\mu)(h_f)
	\\
	&=
	\mu(\mathscr{L}_{f}(h_f))
	=
	\rho(\mathscr{L}_{f}) \mu(h_f)
	=
	\rho(\mathscr{L}_{f})J(h_{f})(\mu),
\end{align*}
moreover, if $\mu\in\mathscr{M}_{+}(X)$, then $J(h_f)(\mu) = \mu(h_f)>0$.
Therefore $J(h_f)$ is a positive eigenvector of $\mathscr{L}^{**}_{f}$
associated to $\rho(\mathscr{L}_{f})$.

Let us assume that there is an eigenfunction $\xi$, which might not be necessarily positive,
for $\mathscr{L}^{**}_{f}$, associated to $\rho(\mathscr{L}_{f})$. Then for any 
signed measure $\mu$ we have 
\begin{align*}
	0
	=
	\mathscr{L}_{f}^{**}(\xi)(\mu)-\rho(\mathscr{L}_{f})\xi(\mu)
	=
	\xi(\mathscr{L}_{f}^{*}(\mu))-\rho(\mathscr{L}_{f})\xi(\mu)
	=
	\xi((\mathscr{L}_{f}^{*}-\rho(\mathscr{L}_{f}))\mu).
\end{align*}
The above equation implies, when such an eigenvector exists,  
that the range of the operator $\mathscr{L}_{f}^{*}-\rho(\mathscr{L}_{f})$ is 
contained in the kernel of $\xi$, i.e., 
$\mathscr{R}(\mathscr{L}_{f}^{*}-\rho(\mathscr{L}_{f}))\subset \mathrm{ker}(\xi)$.
On the other hand, if for some $\nu\in \mathscr{G}^{*}(f)$ we have 
\[
\overline{\mathscr{R}(\mathscr{L}_{f}^{*}-\rho(\mathscr{L}_{f}))}
\cap
\langle \nu \rangle 
=
\{0\},
\qquad\text{where}\  \langle \nu \rangle\ \text{is the subspace generated by} \ \nu,
\]
then, as a consequence of the Hahn-Banach theorem, we can guarantee the existence of 
at least one continuous functional $\xi \in \mathscr{M}^{*}_{s}$ so that 
$\mathscr{R}(\mathscr{L}_{f}^{*}-\rho(\mathscr{L}_{f}))\subset \mathrm{ker}(\xi)$
and $\xi(\nu)=1$. Such a functional is clearly an eigenvector for 
$\mathscr{L}^{**}_{f}$ associated to $\rho(\mathscr{L}_{f})$. 
We  remark that the eigenvector $\xi$ is not necessarily 
positive, nor necessarily an element of $J(C(X))$, the image of the natural map.

As long as $\psi$ is a $\mathscr{B}(X)$-measurable real function satisfying 
$\|\psi\|_{\infty}<+\infty$, we can naturally define $\mathscr{L}_{f}\psi$,
since for every $x\in X$ the following integral is well defined and finite:
\[
\int_{E} \exp(f(ax))|\psi(ax)|\, \mathrm{d}p(a).
\]

\begin{definition}
	Let $(X,d_{X})$ a compact metric space and  $f: X\to\mathbb{R}$ be a function. 
	We say that $f$ is a {\bf Baire-class-one} function if there
	is a sequence of continuous functions converging to $f$ pointwise.
\end{definition}

\begin{proposition}
	Let $f$ be a continuous potential and suppose that there exists a 
	Baire-class-one real function $\psi:X\to\mathbb{R}$ satisfying 
	$0<m\leq \psi\leq M<+\infty$ and $\mathscr{L}_{f}\psi =\rho(\mathscr{L}_{f})\psi$.
	Then 
	\[
	\overline{\mathscr{R}(\mathscr{L}_{f}^{*}-\rho(\mathscr{L}_{f}))}
	\cap 
	\mathscr{M}_{1}(X)
	=
	\emptyset.
	\]
\end{proposition}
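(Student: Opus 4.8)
The goal is to show that no probability measure can lie in the closure of the range of $\mathscr{L}_f^* - \lambda_f$. Since we are handed a bounded Baire-class-one eigenfunction $\psi$ with $0 < m \le \psi \le M$ satisfying $\mathscr{L}_f \psi = \lambda_f \psi$, the natural idea is to build a functional $\xi \in \mathscr{M}_s(\Omega)^*$ that annihilates $\mathscr{R}(\mathscr{L}_f^* - \lambda_f)$ but evaluates strictly positively on every probability measure; its continuity then forces the closure of the range to stay away from $\mathscr{M}_1(\Omega)$. The obvious candidate is ``integration against $\psi$'', i.e. $\xi(\mu) = \int_\Omega \psi \, \mathrm{d}\mu$, which mimics the computation done earlier in the excerpt for a \emph{continuous} eigenfunction $h_f$, where it was shown that $J(h_f)$ is an eigenvector of $\mathscr{L}_f^{**}$. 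The subtlety is that $\psi$ is only Baire-class-one, so $\xi$ is not a priori an element of $J(C(\Omega))$, and one must check that $\mu \mapsto \int_\Omega \psi\,\mathrm{d}\mu$ is a bounded linear functional on $\mathscr{M}_s(\Omega)$ and that the identity $\int_\Omega \mathscr{L}_f \varphi\, \mathrm{d}\mu = \int_\Omega \varphi \, \mathrm{d}[\mathscr{L}_f^* \mu]$ can be upgraded from $\varphi \in C(\Omega)$ to $\varphi = \psi$.

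\medskip

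\textbf{Key steps.} First I would record that $\psi$ bounded and $\mathscr{F}$-measurable makes $\xi(\mu) := \int_\Omega \psi\,\mathrm{d}\mu$ a well-defined bounded linear functional on $\mathscr{M}_s(\Omega)$ with $\|\xi\| \le \|\psi\|_\infty$; this uses only $|\xi(\mu)| \le \|\psi\|_\infty \|\mu\|_{\mathrm{TV}}$. Second, I would verify the adjoint identity $\int_\Omega \mathscr{L}_f \psi \, \mathrm{d}\mu = \int_\Omega \psi\, \mathrm{d}[\mathscr{L}_f^* \mu]$ for every $\mu \in \mathscr{M}_s(\Omega)$. This is the place where Baire-class-one enters: take continuous $\psi_n \to \psi$ pointwise; one may also assume $\|\psi_n\|_\infty \le \|\psi\|_\infty + 1$ by truncation (truncation of a continuous function is continuous). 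Then $\mathscr{L}_f \psi_n \to \mathscr{L}_f \psi$ pointwise by dominated convergence (the dominating function $e^{f(ax)}\|\psi\|_\infty$ is integrable in $a$ since $f$ is continuous and $\Omega$ compact), and both $(\psi_n)$ and $(\mathscr{L}_f \psi_n)$ are uniformly bounded, so dominated convergence against the finite measures $\mu$ and $\mathscr{L}_f^*\mu$ (the latter being a genuine finite signed measure) lets us pass to the limit in $\int \mathscr{L}_f \psi_n \, \mathrm{d}\mu = \int \psi_n \, \mathrm{d}[\mathscr{L}_f^* \mu]$. Third, combining this with $\mathscr{L}_f \psi = \lambda_f \psi$ gives, for every $\mu$,
\[
\xi\big((\mathscr{L}_f^* - \lambda_f)\mu\big)
= \int_\Omega \psi\, \mathrm{d}[\mathscr{L}_f^*\mu] - \lambda_f \int_\Omega \psi\, \mathrm{d}\mu
= \int_\Omega \mathscr{L}_f\psi\, \mathrm{d}\mu - \lambda_f \int_\Omega \psi\, \mathrm{d}\mu
= 0,
\]
so $\mathscr{R}(\mathscr{L}_f^* - \lambda_f) \subset \ker \xi$, and since $\xi$ is continuous, $\overline{\mathscr{R}(\mathscr{L}_f^* - \lambda_f)} \subset \ker \xi$. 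Fourth, for any $\nu \in \mathscr{M}_1(\Omega)$ we have $\xi(\nu) = \int_\Omega \psi \, \mathrm{d}\nu \ge m > 0$, so $\nu \notin \ker \xi$; hence no probability measure lies in the closed range, which is the claim.

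\medskip

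\textbf{Main obstacle.} The only real work is Step two: justifying that the defining identity of the transpose, a priori valid only for test functions $\varphi \in C(\Omega)$, survives the pointwise (not uniform) approximation of $\psi$ by continuous functions. The key observations making this go through are that $\mathscr{L}_f^*\mu$ is itself a \emph{finite} signed Radon measure (total variation controlled by $e^{\|f\|_\infty}\|\mu\|_{\mathrm{TV}}$), so dominated convergence applies on the right-hand side, and that $\mathscr{L}_f$ maps a uniformly bounded pointwise-convergent sequence to a uniformly bounded pointwise-convergent sequence (again by dominated convergence in the $a$-integral, using compactness of $\Omega$ and continuity of $f$), so dominated convergence applies on the left-hand side as well. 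Everything else is a two-line computation. Note the conclusion is stated as emptiness of an intersection, which is slightly stronger than mere ``the range is not dense'': it says the closed range is a proper closed hyperplane-type subspace disjoint from $\mathscr{M}_1(\Omega)$, exactly what is needed to feed the Hahn-Banach construction of eigenvectors discussed just before the proposition.
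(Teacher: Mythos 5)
Your proposal is correct and follows essentially the same route as the paper: both show that integration against $\psi$ annihilates $\mathscr{R}(\mathscr{L}_{f}^{*}-\lambda_{f})$ by approximating $\psi$ pointwise with uniformly bounded continuous functions and applying dominated convergence, then use $\psi\geq m>0$ together with the bound $|\int_{\Omega}\psi\,\mathrm{d}\mu|\leq\|\psi\|_{\infty}\|\mu\|_{\mathrm{TV}}$ to keep probability measures out of the norm closure. The only cosmetic difference is that you package the closure step as continuity of the functional $\mu\mapsto\int_{\Omega}\psi\,\mathrm{d}\mu$, whereas the paper writes out the same total-variation estimate explicitly; your explicit truncation to keep the approximating sequence uniformly bounded is a detail the paper only makes explicit in the theorem that follows.
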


\begin{proof}
	The first step is to show that $(\mathscr{L}_{f}^{*}-\rho(\mathscr{L}_{f}))(\mu)(\psi)=0$,
	for any signed measure $\mu$. 
	Since we are not assuming that $\psi\in C(X)$, there is a small issue in
	using the duality relation for the Ruelle operator and its transpose. 
	But this issue can be easily overcome as follows. Let $(\psi_n)_{n\in\mathbb{N}}$
	a sequence of continuous functions pointwise converging to $\psi$. Then by
	the Dominated Convergence Theorem, for any finite signed measure $\mu$ we have 
	\begin{align*}
		(\mathscr{L}_{f}^{*}-\rho(\mathscr{L}_{f}))(\mu)(\psi)
		&=
		\lim_{n\to\infty}
		(\mathscr{L}_{f}^{*}-\rho(\mathscr{L}_{f}))(\mu)(\psi_n)
		\\[0.2cm]
		&=
		\lim_{n\to\infty}
		\mu(\mathscr{L}_{f}\psi_n)-\rho(\mathscr{L}_{f})\mu(\psi_n)
		\\[0.2cm]
		&=
		\mu(\mathscr{L}_{f}\psi)-\rho(\mathscr{L}_{f})\mu(\psi)
		\\[0.2cm]
		&=
		0.
	\end{align*}
	Suppose by contradiction that 
	$\mathscr{R}(\mathscr{L}_{f}^{*}-\rho(\mathscr{L}_{f}))\cap \mathscr{M}_1(X)\neq \emptyset$.
	Then there are $\mu\in\mathscr{M}_{s}(X)$ and 
	$\nu\in \mathscr{M}_{1}(X)$ such that $(\mathscr{L}_{f}^{*}-\rho(\mathscr{L}_{f}))\mu=\nu$. 
	By using the above equalities and the bounds on $\psi$, we get 
	$
	0<
	m
	\leq 
	\min(\psi)\nu(X)
	\leq 
	\nu(\psi)
	=
	(\mathscr{L}_{f}^{*}-\rho(\mathscr{L}_{f}))(\mu)(\psi)
	=
	0
	$
	and therefore $\mathscr{R}(\mathscr{L}_{f}^{*}-\rho(\mathscr{L}_{f}))\cap \mathscr{M}_1(X)=\emptyset$.
	
	We now prove that there is no sequence $(\mathscr{L}_{f}^{*}-\rho(\mathscr{L}_{f}))(\mu_n)$ 
	converging to a probability measure $\nu$ in the strong topology. 
	Suppose by contradiction that 
	\[
	\|(\mathscr{L}_{f}^{*}-\rho(\mathscr{L}_{f}))(\mu_n)-\nu\|_{T}\to 0,
	\] 
	when $n\to\infty$. 
	We have already shown that $(\mathscr{L}_{f}^{*}-\rho(\mathscr{L}_{f}))(\mu_n)(\psi)=0$, and 
	therefore we have
	\[
	m
	\leq 
	\nu(\psi)
	=
	|(\mathscr{L}_{f}^{*}-\rho(\mathscr{L}_{f}))(\mu_n)(\psi)-\nu(\psi)|
	\leq 
	\|(\mathscr{L}_{f}^{*}-\rho(\mathscr{L}_{f}))(\mu_n)-\nu\|_{T}\ 
	\|\psi\|_{\infty}.
	\]
	Since the rhs converges to zero we reach a contradiction. 
\end{proof}

\begin{theorem}
	Let $f$ be a continuous potential and suppose that there exists a 
	Baire-class-one real function $\psi:X\to\mathbb{R}$, satisfying 
	$0<m\leq \psi\leq M<+\infty$ and $\mathscr{L}_{f}\psi =\rho(\mathscr{L}_{f})\psi$.
	Then there is a positive element $\xi_{f}\in \mathscr{M}_{s}^{*}(X)$ 
	such that $\mathscr{L}_{f}^{**}\xi_{f}= \rho(\mathscr{L}_{f})\xi_{f}$.
\end{theorem}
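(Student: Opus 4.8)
The plan is to combine the previous proposition with the Hahn–Banach separation argument that was already sketched in the running text just before the statement of this theorem. Recall that the discussion preceding the theorem established the following dichotomy: if for some $\nu\in\mathcal{G}^*(f)$ we have $\overline{\mathscr{R}(\mathscr{L}_{f}^{*}-\lambda_{f})}\cap\langle\nu\rangle=\{0\}$, then there exists $\xi\in\mathscr{M}_{s}^{*}(\Omega)$ with $\mathscr{R}(\mathscr{L}_{f}^{*}-\lambda_{f})\subset\ker(\xi)$ and $\xi(\nu)=1$, and any such $\xi$ is automatically an eigenvector of $\mathscr{L}_{f}^{**}$ for $\lambda_{f}$. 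So the core of the argument is already in place; what the theorem adds is (a) the verification that the hypothesis of this dichotomy holds whenever the bounded Baire-class-one eigenfunction $\psi$ exists, and (b) the upgrade from ``some eigenvector'' to ``some \emph{positive} eigenvector.''

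First I would invoke Proposition~\ref{prop-exist-auto-medida} to pick an eigenmeasure $\nu\in\mathcal{G}^*(f)$, which is non-empty for any continuous $f$. Next I would use the preceding proposition: under the stated hypothesis on $\psi$ we have $\overline{\mathscr{R}(\mathscr{L}_{f}^{*}-\lambda_{f})}\cap\mathscr{M}_{1}(\Omega)=\emptyset$. Since any nonzero element of $\langle\nu\rangle$ is a nonzero scalar multiple of the probability measure $\nu$, and scaling commutes with the closed linear subspace $\overline{\mathscr{R}(\mathscr{L}_{f}^{*}-\lambda_{f})}$, the disjointness from $\mathscr{M}_1(\Omega)$ immediately yields $\overline{\mathscr{R}(\mathscr{L}_{f}^{*}-\lambda_{f})}\cap\langle\nu\rangle=\{0\}$. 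This is exactly the hypothesis needed to run the Hahn–Banach construction: the closed subspace $\overline{\mathscr{R}(\mathscr{L}_{f}^{*}-\lambda_{f})}$ and the one-dimensional subspace $\langle\nu\rangle$ are disjoint closed subspaces, so there is a continuous linear functional $\xi_f\in\mathscr{M}_{s}^{*}(\Omega)$ vanishing on $\overline{\mathscr{R}(\mathscr{L}_{f}^{*}-\lambda_{f})}$ with $\xi_f(\nu)=1$. As computed in the text, $\xi_f((\mathscr{L}_{f}^{*}-\lambda_{f})\mu)=0$ for every signed measure $\mu$ is equivalent to $\mathscr{L}_{f}^{**}\xi_f=\lambda_f\xi_f$.

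The main obstacle is positivity: a bare Hahn–Banach functional need not send $\mathscr{M}_{+}(\Omega)$ into $[0,+\infty)$. To force positivity I would instead apply the Hahn–Banach \emph{extension} theorem in the ordered form. Consider the sublinear functional on $\mathscr{M}_s(\Omega)$ given by $\mu\mapsto \inf\{\, (\mathscr{L}_{f}^{*n}(|\mu|))(\psi)/\lambda_f^{\,n} : n\ge 0\,\}$, or more robustly the gauge of the convex set $\mathscr{M}_{+}(\Omega)+\overline{\mathscr{R}(\mathscr{L}_{f}^{*}-\lambda_{f})}$; the key point is that this convex cone does \emph{not} contain $-\nu$, precisely because evaluating against $\psi$ gives $\ge m\cdot 0=0$ on the range part and $\ge 0$ on the positive part, so the total is $\ge 0$, while $(-\nu)(\psi)\le -m<0$. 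Hence $-\nu$ lies strictly outside the closed convex cone $\mathscr{C}:=\overline{\mathscr{M}_{+}(\Omega)+\mathscr{R}(\mathscr{L}_{f}^{*}-\lambda_{f})}$, and a separating-hyperplane argument produces $\xi_f\in\mathscr{M}_s^{*}(\Omega)$ with $\xi_f\ge 0$ on $\mathscr{C}$ — in particular $\xi_f(\mathscr{M}_{+}(\Omega))\subset[0,+\infty)$ — and $\xi_f(\nu)>0$; normalizing gives $\xi_f(\nu)=1$. Since $\mathscr{R}(\mathscr{L}_{f}^{*}-\lambda_{f})$ is a \emph{linear} subspace contained in $\mathscr{C}$, the nonnegativity of $\xi_f$ on $\mathscr{C}$ forces $\xi_f$ to vanish on that subspace, which as before is exactly the eigenvector equation $\mathscr{L}_{f}^{**}\xi_f=\lambda_f\xi_f$.

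To finish I would record that $\xi_f\neq 0$ (since $\xi_f(\nu)=1$) and positive in the declared sense ($\xi_f(\mathscr{M}_{+}(\Omega))\subset[0,+\infty)$), completing the proof. The one delicate point worth double-checking in the write-up is that the convex cone $\mathscr{C}$ really is disjoint from the open half-space determined by $-\nu$, i.e.\ that no sequence $\rho_n+(\mathscr{L}_{f}^{*}-\lambda_{f})\mu_n$ with $\rho_n\in\mathscr{M}_{+}(\Omega)$ converges in total-variation norm to $-\nu$; this is handled exactly as in the second half of the preceding proposition's proof, testing against $\psi$ and using $\|\cdot\|_T$-continuity of $\mu\mapsto\mu(\psi)$ together with the uniform lower bound $\psi\ge m>0$.
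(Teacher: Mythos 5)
Your proposal is correct, but it proves the theorem by a genuinely different route than the paper. The paper's proof is constructive: it takes continuous functions $\psi_n\to\psi$ pointwise, truncates them so that $m\le\psi_n\le M$, pushes them into the bidual via the natural map $J$, extracts a weak-$*$ convergent subnet of $(J(\psi_n))$ by Banach--Alaoglu, and checks that the limit $\xi_f$ is positive (indeed $\xi_f(\mu)\ge m\,\mu(\Omega)$ for $\mu\in\mathscr{M}_{+}(\Omega)$) and annihilates $\mathscr{R}(\mathscr{L}_{f}^{*}-\lambda_{f})$, the last step using dominated convergence exactly as in your identity $(\mathscr{L}_{f}^{*}-\lambda_{f})(\mu)(\psi)=0$. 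You instead run an abstract Hahn--Banach separation: you separate $-\nu$ from the closed convex cone $\mathscr{C}=\overline{\mathscr{M}_{+}(\Omega)+\mathscr{R}(\mathscr{L}_{f}^{*}-\lambda_{f})}$, using that $\eta\mapsto\eta(\psi)$ is total-variation continuous, nonnegative on $\mathscr{C}$, and $\le -m$ at $-\nu$; nonnegativity of the separating functional on a cone containing the linear subspace $\mathscr{R}(\mathscr{L}_{f}^{*}-\lambda_{f})$ then forces it to vanish there, which is the eigen-equation, while $\mathscr{M}_{+}(\Omega)\subset\mathscr{C}$ gives positivity and $\xi_f(\nu)>0$ gives non-triviality. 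This is sound: the cone version of the separation theorem handles the positivity issue that the plain kernel/Hahn--Banach argument sketched before the theorem cannot, and your verification that $-\nu\notin\mathscr{C}$ is exactly the right closure check (the aside about a sublinear functional built from $\inf_n\lambda_f^{-n}\mathscr{L}_f^{*n}(|\mu|)(\psi)$ is unnecessary and best dropped; the gauge/separation argument you actually carry out is the one that works). What each approach buys: the paper's net construction keeps an explicit link between $\xi_f$ and $\psi$ (it is a weak-$*$ cluster point of $J(\psi_n)$) and yields the quantitative bound $\xi_f(\mu)\ge m\,\mu(\Omega)$, whereas your argument is softer, avoids subnets entirely, and isolates the only property of $\psi$ really used --- that integration against it is a norm-continuous functional which is bounded below on probabilities and annihilates the range --- at the price of being non-constructive and losing that quantitative positivity; also note you do not actually need $\nu\in\mathcal{G}^{*}(f)$ or the preceding proposition's conclusion for your separation step, since any probability measure would serve as the point being separated.
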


\begin{proof}
	Let $(\psi_n)_{n\in\mathbb{N}}$ be a sequence in $C(X)$ such that 
	$\psi_n\to \psi$ pointwise. Since the sequence $\max\{ \min\{\psi_n,M \}, m\}$ 
	is continuous and converges pointwise to $\psi$ we can assume that $m\leq \psi_n\leq M$.
	Now we consider the sequence of linear functionals $(J(\psi_n))_{n\in\mathbb{N}}$ in the 
	bidual of $C(X)$. The natural map is an isometry and so $\|J(\psi_n)\|\leq M$.
	From the Banach-Alaoglu theorem it follows that the closed ball $\overline{B(0,M)}$ is 
	compact in the weak-$*$ topology, therefore  
	the sequence $(J(\psi_n))_{n\in\mathbb{N}}$ viewed as a topological net 
	has at least one convergent subnet $(J(\psi_{i(d)}))_{d\in D}$ so that 
	\[
	\lim_{d\in D} J(\psi_{i(d)}) = \xi_{f}.
	\]
	We claim that $\xi_{f}$ is a positive eigenfunction of $\mathscr{L}_{f}^{**}$.
	The positivity of $\xi_{f}$ is trivial, because for any $\mu\in \mathscr{M}_{+}(X)$ we have 
	$J(\psi_{i(d)})(\mu)\geq m$. In particular, $\xi_{f}$ is not the null vector. 
	To finish the proof it is enough to show that $\mathscr{R}(\mathscr{L}_{f}^{*}-\rho(\mathscr{L}_{f}))\subset \mathrm{ker}(\xi_{f})$.
	Indeed, for any $\mu\in\mathscr{M}_{s}^{*}(X)$ we have 	
	\begin{eqnarray*}
		\xi_{f} ((\mathscr{L}_{f}^{*}-\rho(\mathscr{L}_{f}))(\mu) )
		&=&
		\lim_{d\in D}J(\psi_{i(d)}) ((\mathscr{L}_{f}^{*}-\rho(\mathscr{L}_{f}))(\mu) )
		\\[0.2cm]
		&=&
		\lim_{d\in D} \mu (\mathscr{L}_{f}(\psi_{i(d)})-\rho(\mathscr{L}_{f})\psi_{i(d)} )
		\\[0.2cm]
		&=&
		\mu (\mathscr{L}_{f}(\psi)-\rho(\mathscr{L}_{f})\psi )
		\\[0.2cm]
		&=& 
		0.
	\end{eqnarray*} 
\end{proof}

In what follows we consider the extension 
$\mathbb{L}_{f}:L^1(\nu)\to L^1(\nu)$  
of the classical Ruelle operator $\mathscr{L}_{f}:C(X)\to C(X)$.
We say that a bounded positive linear operator
$\mathbb{L}_{f}:L^1(\nu)\to L^1(\nu)$ 
is an extension of the transfer operator
$\mathscr{L}_{f}:C(X)\to C(X)$ if the vector space
$C(X)$ embeds in $L^1(\nu)$ and for any $\varphi\in C(X)$ we have
$\mathbb{L}_{f}[\varphi]_{\nu}\cap C(X) = \{\mathscr{L}_{f}\varphi\}$.

When $E$ is a finite set, 
we can apply either Proposition 2.2 of \cite{MR1014246} or Corollary 4 in \cite{MR0466493} to 
obtain this $L^1(\nu)$-extension. 
For uncountable alphabets this extension is obtained in \cite{CMRS-2020}. 
The key point in obtaining the extension is to 
show that any $\nu\in\mathscr{G}^{*}(f)$ is fully supported, that is,  
$\mathrm{supp}(\nu)=X$. In \cite{CMRS-2020} the authors prove that 
whenever $\mathrm{supp}(p)=E$ we have that $\mathrm{supp}(\nu)=X$.
Therefore from now on we will assume that $\mathrm{supp}(p)=E$.

One of the advantages of working with the extension $\mathbb{L}_{f}$ is that we 
can compute explicitly its operator norm. In fact, 
\begin{align*}
	\| \mathbb{L}_{f} \|_{\mathrm{op}}
	&\equiv 
	\sup_{\|\varphi\|_{1}\leq 1 }\ 
	\int_{X} \left|\mathbb{L}_{f}\varphi \right| \, d\nu 
	\\
	&\leq 
	\sup_{\|\varphi\|_{1}\leq 1 }\ 
	\int_{X} \mathbb{L}_{f}\varphi^{+}+\mathbb{L}_{f}\varphi^{-} \, d\nu 
	\\
	&\leq 
	\sup_{\|\varphi\|_{1}\leq 1 }\ 
	\rho(\mathscr{L}_{f})\int_{X} \varphi^{+}+\varphi^{-} \, d\nu 
	\\
	&\leq 
	\rho(\mathscr{L}_{f})
\end{align*}
and the supremum is attained if we take the test function $\varphi\equiv 1$.

Note that for any $\varphi\in L^{1}(\nu)$ we have 
\[
\int_{X} \varphi\  d[\mathscr{L}_{f}^{*}\nu]
=
\int_{X} \mathbb{L}_{f} \varphi\  d\nu.
\]

In the sequel we consider $\mathbb{L}_{f}$, the natural bounded 
extension of the Ruelle operator
acting on $L^1(\nu)$, and its bounded extension $\mathbb{L}_{f}^{**}$ 
to the bidual of $L^1(\nu)$.  

The double transpose of $\mathbb{L}_{f}^{**}:L^1(\nu)^{**}\to L^1(\nu)^{**}$ 
sends $\xi\in L^1(\nu)^{**}$ to $\mathbb{L}_{f}^{**}\xi$, 
which is defined for each functional $\ell\in L^{1}(\nu)^{*}$ by
\[
\mathbb{L}_{f}^{**}\xi(\ell)  \equiv  \xi(\mathbb{L}_{f}^{*}\ell). 
\]

Next, we prove that $\mathbb{L}^{**}_{f}$ always   
has a positive eigenvector associated to $\rho(\mathscr{L}_{f})$.
\begin{displaymath}
	\xymatrixcolsep{5pc}\xymatrix{
		(L^{1}(\nu)^{**},\|\cdot\|_{\mathrm{op}})  \ar[r]^{\mathbb{L}_{f}^{**}} 
		& (L^{1}(\nu)^{**},\|\cdot\|_{\mathrm{op}}) 
		\\
		(L^{1}(\nu)^{*},\|\cdot\|_{\mathrm{op}})   \ar[u]^{\text{duality}} 
		& \ar[l]_{\mathbb{L}_{f}^{*} }  (L^{1}(\nu)^{*},\|\cdot\|_{\mathrm{op}}) \ar[u]_{\text{duality}}
		\\
		(L^1(\nu),\|\cdot\|_{1}) \ar[u]^{\text{duality}}  \ar[r]^{\mathbb{L}_{f}} 
		&  (L^1(\nu),\|\cdot\|_{1}) \ar[u]_{\text{duality}}   
		\\
		(C(X),\|\cdot\|_{1}) \ar[u]^{\text{extension}}  \ar[r]^{\mathscr{L}_{f}} 
		&  (C(X),\|\cdot\|_{1}). \ar[u]_{\text{extension}}   }\\
\end{displaymath}

\bigskip 

Now we are ready to prove one of the main results of this paper. 

\bigskip 

\begin{theorem}\label{teo-autofunc-bidual}
	Let $X=E^{\mathbb{N}}$, where $E$ is a compact metric space,  $f:X\to\mathbb{R}$
	be a continuous potential, $\nu\in\mathscr{G}^{*}(f)$ 
	and $\mathbb{L}_{f}:L^{1}(\nu)\to L^{1}(\nu)$ be the natural 
	extension of the Ruelle operator. Then there exists a positive element $\xi_{f}\in L^{1}(\nu)^{**}$
	such that 
	\[
	\mathbb{L}_{f}^{**} \xi_{f}
	=
	\rho(\mathscr{L}_{f})\xi_{f}.
	\]
\end{theorem}

\begin{proof} For each $n\in\mathbb{N}$ we define 
	\[ 
	\xi_n \equiv \frac{1}{\lambda^n_{f}}J( \mathbb{L}^{n}_{f}(1) ),
	\] 
	here $J$ denotes the natural map from $L^{1}(\nu)$ to its bidual. 
	Since the mapping $J$ is an isometry and $\|\mathbb{L}_{f}^n(1)\|_{L^{1}(\nu)}\leq \rho(\mathscr{L}_{f})^n$,
	it follows that $\|\xi_n\|\leq 1$. We remark that the mapping 
	$L^{1}(\nu)\ni \varphi\mapsto \nu(\varphi)$ is a norm-one element of $L^1(\nu)^{*}$. 
	This linear functional will be denoted by $\ell$ and for any $n\in\mathbb{N}$ 
	we have $\xi_n(\ell)=1$. 
	
	As we did before, if we look at the sequence $(\xi_n)_{n\in\mathbb{N}}$ as a 
	topological net, 
	it follows from the Banach-Alaoglu theorem that this net 
	has at least one convergent subnet 
	$(\xi_{i(d)})_{d\in D}$. Let $\xi_{f}$ the limit of such a subnet. Clearly this
	functional is non-null, positive and $\xi_{f}(\nu)=1$. Note that 
	$\rho(\mathscr{L}_{f})\xi_{n+1}= \mathbb{L}_{f}^{**}\xi_n$ for all $n\in\mathbb{N}$. 
	By using the weak-$*$ to weak-$*$ continuity of $\mathbb{L}_{f}^{**}$ 
	and a compactness argument we have 
	\begin{align*}
		\mathbb{L}_{f}^{**} \xi_{f}
		=
		\rho(\mathscr{L}_{f})\widetilde{\xi}_{f},
	\end{align*}
	for some $\widetilde{\xi}_{f}\in L^{1}(\nu)^{**}$.
	Note that $\xi_{n+1}(\ell)=1$ for all $n\in\mathbb{N}$ so
	$\widetilde{\xi}_{f}(\ell)=1$.
	Since 
	$\langle \ell\rangle \oplus \mathrm{ker}(\xi_{f})
	= 
	L^{1}(\nu)^{*} 
	= 
	\langle \ell\rangle \oplus \mathrm{ker}(\widetilde{\xi_{f}}) $ 
	follows that $\mathrm{ker}(\xi_{f}) =\mathrm{ker}(\widetilde{\xi_{f}})$
	and therefore $\widetilde{\xi_{f}}$ is non-zero multiple of $\xi_{f}$.
	The image of theses functionals evaluated at $\ell$ coincide, 
	so they are equal, which allows us to conclude that 
	$\xi_{f}$ is an eigenvector of $\mathbb{L}_{f}^{**}$. 
	To complete the proof we observe that for all 
	positive functional $\ell\in L^1(\nu)^{*}$, we have $\xi_{f}(\ell)\geq 0$.
\end{proof}

\section{Invariant Measures and Integrable Eigenfunctions}
\label{sec-inv-measures}

The next result is an important application of the 
existence of $\xi_f$. Before presenting this result let us
introduce some more notation. 
For each $A\in\mathscr{B}(X)$ and $\nu \in \mathscr{M}_{1}(X)$ 
we define a non-negative measure in $\mathscr{B}(X)$ so that 
$B\mapsto \nu(A\cap B)$. This measure will be simply denoted by $1_{A}\nu$. 
It will be convenient to identify the measure $1_{A}\nu$ with the element of 
$L^{1}(\nu)^*$ given by $\varphi\mapsto \nu(1_A \varphi)$. 
Recall that this functional was denoted by $\ell_{A}$, 
but in what follows the notation $1_{A}\nu$ is more convenient.

Recall that a dual pair $\langle X,Y\rangle$ is a pair of vector
spaces $X$ and $Y$ with a bilinear
map $X\times Y\ni (x,y)\longmapsto \langle y,x\rangle\in\mathbb{R}$,
which is nondegenerate in the sense that 
$\forall x, \exists y\ \langle y,x\rangle \neq 0$
and $\forall y, \exists x\ \ \langle y,x\rangle \neq 0$.
Here we are interested in the case where $X=L^{1}(\nu)$,
$Y=L^{1}(\nu)^{*}$, and $\langle y,x\rangle= y(x)$.

\begin{theorem}\label{teo-medida-gibbs-invariante}
	Let $f$ be a continuous potential, $\nu\in \mathscr{G}^{*}(f)$
	and let $\xi_{f}$ be a eigenfunction of $\mathbb{L}_{f}^{**}$
	as constructed above. Then the set function $\mu$ given by 
	$
	\mathscr{B}(X) \ni  A
	\mapsto 
	\xi_{f}(1_{A}\nu)
	$
	is a non-negative additive shift-invariant set function. 
\end{theorem}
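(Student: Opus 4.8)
The plan is to show that the set function $A \mapsto \xi_f(1_A\nu)$ is a shift-invariant probability measure by verifying, in order: (i) it is a probability measure on $\mathscr{F}$; (ii) it satisfies the shift-invariance identity $\mu(\sigma^{-1}A) = \mu(A)$ for all $A \in \mathscr{F}$. The key mechanism will be the duality relations for $\mathscr{L}_f^{*}$ and $\mathscr{L}_f^{**}$ acting on $L^1(\nu)$, combined with the eigenfunction equations $\mathscr{L}_f^{*}\nu = \lambda_f\nu$ and $\mathscr{L}_f^{**}\xi_f = \lambda_f\xi_f$, and the normalization $\xi_f(\nu) = 1$.

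First I would check that $\mu$ is well-defined and a probability measure. Positivity of $\mu(A)$ is immediate since $1_A\nu \in \mathscr{M}_{+}(\Omega)$ and $\xi_f$ is positive; $\mu(\Omega) = \xi_f(1_\Omega\nu) = \xi_f(\nu) = 1$; finite additivity follows from linearity of $\xi_f$ together with $1_{A \sqcup B}\nu = 1_A\nu + 1_B\nu$. For countable additivity, if $A_n \downarrow \emptyset$ then $\|1_{A_n}\nu\|_{(L^1(\nu))^{*}} = \nu(A_n) \to 0$ by continuity of $\nu$, and since $\xi_f$ is a bounded functional, $\mu(A_n) = \xi_f(1_{A_n}\nu) \to 0$; this gives continuity at $\emptyset$, hence $\sigma$-additivity.

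The heart of the matter is shift-invariance. The natural computation is to observe that for $\varphi \in C(\Omega)$ (or bounded $\mathscr{F}$-measurable) one has the pointwise identity $\mathscr{L}_f(\varphi \cdot (g\circ\sigma)) = g\cdot \mathscr{L}_f\varphi$, which in particular with $\varphi = 1$ and $g = 1_A$ yields $\mathscr{L}_f(1_{\sigma^{-1}A}) = 1_A \cdot \mathscr{L}_f(1)$. Translating this into the $(L^1(\nu))^{*}$ picture, I would establish the operator identity $\mathscr{L}_f^{*}(1_A\nu) = \lambda_f\, 1_{\sigma^{-1}A}\nu$: indeed for any $\varphi$, $(\mathscr{L}_f^{*}(1_A\nu))(\varphi) = \int \mathscr{L}_f\varphi \cdot 1_A\, d\nu$, and one needs to rewrite this, using the defining formula for $\mathscr{L}_f$ and the eigenmeasure relation $\mathscr{L}_f^{*}\nu = \lambda_f\nu$, as $\lambda_f\int \varphi\cdot 1_{\sigma^{-1}A}\,d\nu$. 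Then, using that $J(\mathscr{L}_f^{n}(1))/\lambda_f^{n} \to \xi_f$ weak-$*$ along the subnet from the proof of Theorem~\ref{teo-autofunc-bidual} — or more directly the eigenvector relation — one computes
\[
\mu(\sigma^{-1}A) = \xi_f(1_{\sigma^{-1}A}\nu) = \tfrac{1}{\lambda_f}\,\xi_f\big(\mathscr{L}_f^{*}(1_A\nu)\big) = \tfrac{1}{\lambda_f}\,(\mathscr{L}_f^{**}\xi_f)(1_A\nu) = \tfrac{1}{\lambda_f}\,\lambda_f\,\xi_f(1_A\nu) = \mu(A),
\]
which is exactly shift-invariance.

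The main obstacle I expect is the rigorous justification of the identity $\mathscr{L}_f^{*}(1_A\nu) = \lambda_f\,1_{\sigma^{-1}A}\nu$ for a general $A \in \mathscr{F}$, since $1_A$ and $1_{\sigma^{-1}A}$ are not continuous; one must either first prove it for $A$ a cylinder (where $1_{\sigma^{-1}A}$ depends on coordinates $\geq 2$ and the identity $\mathscr{L}_f(1\cdot(1_A\circ\sigma)) = 1_A\cdot\mathscr{L}_f(1)$ is a direct computation from \eqref{def-operador-ruelle}) and then extend by a monotone-class / $\pi$-$\lambda$ argument using dominated convergence and the fact that both sides are finite measures, or work with bounded $\mathscr{F}$-measurable functions approximated pointwise by continuous ones as was done in the earlier propositions. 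Once this identity is in hand, the displayed chain above closes the proof immediately.
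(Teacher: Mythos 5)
Your shift-invariance argument is correct and runs on essentially the same mechanism as the paper's proof: the pull-out identity $\mathscr{L}_f((g\circ\sigma)\varphi)=g\,\mathscr{L}_f\varphi$, the eigenmeasure relation $\mathscr{L}_f^{*}\nu=\lambda_f\nu$ extended beyond continuous integrands, and the eigenvector equation for $\xi_f$ used through the duality $\mathscr{L}_f^{**}\xi(\eta)=\xi(\mathscr{L}_f^{*}\eta)$. The only real difference is bookkeeping: you establish $\mathscr{L}_f^{*}(1_A\nu)=\lambda_f\,1_{\sigma^{-1}A}\nu$ and apply the eigen-equation once, whereas the paper proves $\mathscr{L}_{f}^{*}((1_{A}\circ \sigma)\nu)=\lambda_{f}^{-1}\mathscr{L}_{f}^{*}\mathscr{L}_{f}^{*}(1_{A}\nu)$ and applies it twice; your identity is the sharper statement (applying $\mathscr{L}_f^{*}$ to it recovers the paper's), and the measurability obstacle you flag at the end is exactly the one the paper also absorbs by using the duality for bounded measurable integrands via the $L^1(\nu)$ extension of $\mathscr{L}_f$, so your cylinder/monotone-class or pointwise-approximation fix is adequate.

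One step as written is wrong, although it concerns a verification the paper does not attempt. In the setting of Theorem \ref{teo-autofunc-bidual}, $\xi_f\in (L^1(\nu))^{**}$ and $1_{A_n}\nu$ is the functional $\varphi\mapsto\nu(1_{A_n}\varphi)$ on $L^1(\nu)$, i.e.\ the class of $1_{A_n}$ in $L^{\infty}(\nu)$; its norm in $(L^1(\nu))^{*}$ is $\|1_{A_n}\|_{L^{\infty}(\nu)}$, which equals $1$ whenever $\nu(A_n)>0$, not $\nu(A_n)$. Hence norm-continuity of $\xi_f$ does not yield $\mu(A_n)\to 0$ for $A_n\downarrow\emptyset$: a positive element of $(L^{\infty}(\nu))^{*}$ is in general only finitely additive, so $\sigma$-additivity of $\mu$ is not automatic from boundedness of $\xi_f$. (Your estimate would be correct if $\xi_f$ were viewed in $C(\Omega)^{**}\cong\mathscr{M}_s(\Omega)^{*}$, where $\|1_{A_n}\nu\|$ is the total variation $\nu(A_n)$.) Since the paper simply asserts that $\mu$ is a probability measure and its proof, like yours, only establishes $\mu(\sigma^{-1}(A))=\mu(A)$, this does not affect the comparison of the invariance arguments; but the $\sigma$-additivity check you propose needs a different justification than the norm bound you give.
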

\begin{proof}
	We first show that $\mu(\sigma^{-1}(A))=\mu(A)$.
	
	By the definition of $\mu$ we have 
	\begin{align}
		\mu(\sigma^{-1}(A)) 
		&= 
		\xi_{f}((1_{A}\circ \sigma)\cdot\nu)
		= 
		\rho(\mathscr{L}_{f})^{-1}
		\mathbb{L}_{f}^{**}(\xi_{f})((1_{A}\circ \sigma)\cdot\nu)
		\nonumber\\
		&=\label{eq-aux-constr-medida-gibbs} 
		\rho(\mathscr{L}_{f})^{-1}
		\xi_{f}
		\Big( \mathbb{L}_{f}^{*}((1_{A}\circ \sigma)\cdot\nu) \Big).
	\end{align}
	
	From the definition of the transpose of the Ruelle operator,
	the Riesz $L^p$ duality theorem and 
	basic properties of the Ruelle operator, we have for any $\varphi\in L^{1}(\nu)$ that

	\begin{align*}
		\langle \mathbb{L}_{f}^{*}&((1_{A}\circ \sigma)\cdot\nu), \varphi \rangle 
		\\[0.2cm]
		&=
		\langle (1_{A}\circ \sigma)\cdot\nu, \mathbb{L}_{f}\varphi \rangle 
		=
		\int_{X} 
		(1_{A}\circ \sigma)
		\mathbb{L}_{f}\varphi
		\ \text{d}\nu
		\\[0.2cm]
		&=
		\rho(\mathscr{L}_{f})^{-1}
		\int_{X} 
		(1_{A}\circ \sigma)
		\mathbb{L}_{f}\varphi
		\ \text{d}[\mathscr{L}_{f}^{*}\nu]
		=
		\rho(\mathscr{L}_{f})^{-1}
		\int_{X} 
		\mathbb{L}_{f}
		\Big(
		(1_{A}\circ \sigma)
		\mathbb{L}_{f}\varphi
		\Big)
		\ \text{d}\nu
		\\[0.2cm]
		&=
		\rho(\mathscr{L}_{f})^{-1}
		\int_{X} 
		1_{A}
		\mathbb{L}_{f}
		\mathbb{L}_{f}\varphi
		\ \text{d}\nu
		=
		\rho(\mathscr{L}_{f})^{-1}
		\int_{X} 
		\mathbb{L}_{f}
		\mathbb{L}_{f}\varphi
		\ \text{d}[1_{A}\nu]
		\\[0.2cm]
		&=
		\rho(\mathscr{L}_{f})^{-1}
		\langle 1_{A}\cdot\nu, \mathbb{L}_{f}\mathbb{L}_{f}\varphi \rangle 
		=
		\rho(\mathscr{L}_{f})^{-1}
		\langle \mathbb{L}_{f}^{*}\mathbb{L}_{f}^{*}(1_{A}\cdot\nu), \varphi \rangle. 
	\end{align*}
	Therefore 
	$
	\mathbb{L}_{f}^{*}((1_{A}\circ \sigma)\cdot\nu)
	=
	\rho(\mathscr{L}_{f})^{-1}
	\mathbb{L}_{f}^{*}\mathbb{L}_{f}^{*}(1_{A}\nu).
	$
	Replacing this identity in \eqref{eq-aux-constr-medida-gibbs}, 
	using the definition of the transposed operator, and the fact 
	that $\xi_{f}$ is an eigenfunction 
	of $\mathbb{L}_{f}^{**}$ two times, we obtain 
	\begin{align*}
		\mu(\sigma^{-1}(A)) 
		&=
		\rho(\mathscr{L}_{f})^{-1}
		\xi_{f}
		\Big( \mathbb{L}_{f}^{*}((1_{A}\circ \sigma)\cdot\nu) \Big)
		= 
		\rho(\mathscr{L}_{f})^{-2}
		\xi_{f}
		\Big( \mathbb{L}_{f}^{*}\mathbb{L}_{f}^{*}(1_{A}\nu) \Big)
		\\[0.2cm]
		&=
		\rho(\mathscr{L}_{f})^{-2}
		\mathbb{L}_{f}^{**}(\xi_{f})
		\Big( \mathbb{L}_{f}^{*}(1_{A}\nu) \Big)
		= 
		\rho(\mathscr{L}_{f})^{-1}
		\xi_{f}
		\Big( \mathbb{L}_{f}^{*}(1_{A}\nu) \Big)
		\\[0.2cm]
		&= 
		\rho(\mathscr{L}_{f})^{-1}
		\mathbb{L}_{f}^{**}(\xi_{f})(1_{A}\nu)
		= 
		\xi_{f}(1_{A}\nu)
		\\[0.2cm]
		&= 
		\mu(A).
	\end{align*}
	
	It is obvious that $\mu$ is a positive finitely additive set function, thus 
	concluding the proof. 
\end{proof}

\begin{theorem}\label{teo-Rad-Nik-autofuncao}
	Let $f$ be a continuous potential and $\nu\in\mathscr{G}^{*}(f)$. Assume that 
	the measure $\mu$ induced by $\xi_{f}\in L^1(\nu)^{**}$, as in Theorem \ref{teo-medida-gibbs-invariante},
	is a probability measure (countably additive). 
	Then $\mu\ll \nu$ and the Radon-Nikodym  derivative $d\mu/d\nu$ is a eigenfunction associated to 
	$\rho(\mathscr{L}_{f})$
	\begin{align}\label{auto-func-L1-RN}
		\mathbb{L}_{f} \frac{d\mu}{d\nu} = \rho(\mathscr{L}_{f}) \frac{d\mu}{d\nu}.
	\end{align}
\end{theorem}

\begin{proof}
	Since $\mu(A)=\xi_{f}(1_{A}\nu)$, for each Borel set $A$ it follows 
	that $\mu\ll \nu$. The proof of \eqref{auto-func-L1-RN} is similar to the one  
	given in \cite{CLS20}. Since it is simple, 
	for the convenience of the reader, we repeat the arguments below.	 
	
	For any continuous function $\varphi$ we have
	\begin{eqnarray*}
		\int_{X} \varphi\, \mathbb{L}_{f}\left( \frac{d\mu}{d\nu} \right)\, d\nu
		&=&
		\int_{X} \mathbb{L}_{f}\left(\varphi\circ\sigma \cdot \frac{d\mu}{d\nu}  \right)\, d\nu
		\\
		&=&
		\mathscr{L}_{f}\int_{X}\varphi\circ\sigma\cdot \frac{d\mu}{d\nu}\, d\nu
		=
		\mathscr{L}_{f}\int_{X}\varphi\circ\sigma\cdot\, d\mu
		\\
		&=&
		\mathscr{L}_{f}\int_{X}\varphi\, d\mu
		=
		\mathscr{L}_{f}\int_{X}\varphi \cdot \frac{d\mu}{d\nu}\, d\nu.
	\end{eqnarray*} 
\end{proof}

The above theorem is one of the most important results of this paper. It ensures 
that the spectral radius of the 
Ruelle operator $\mathbb{L}_{f}$ is
always an eigenvalue, as long as the finitely additive set function $\mu$ provided by
Theorem \ref{teo-medida-gibbs-invariante} is a probability measure. 
We also remark that the above Radon-Nikodym derivative
is not in general a continuous function. This derivative is continuous when the potential 
$f$ has good regularity properties, but for some continuous potentials $f$ 
we know that there is no continuous eigenfunction for $\mathscr{L}_{f}$ 
associated to the spectral radius $\rho(\mathscr{L}_{f})$, see for example reference
\cite{JLMS:JLMS12031}.

These previous results show that the existence of the maximal eigenfunctions
for $\mathbb{L}_{f}$ can be established by showing that the set function 
$\mu(A)=\xi_{f}(1_{A}\nu)$
is countably additive. This problem can overcome if one proves that 
$\mu$ is a regular measure because of Alexandroff's theorem. 
Let us elaborate on this.  
We say that a Borel additive signed measure $\mu$ on a topological space $X=(X,\tau)$
is called regular,
if given $A\in \mathscr{B}(\tau)$ and $\varepsilon>0$, 
there exists a closed set $F\subset A$ and an open set $O\supset A$ 
such that for all Borel sets $C\subset O\setminus F$ we have 
$|\mu(C)|<\varepsilon$. The Alexandroff theorem ensures that if 
$\mu$ is a bounded regular complex-valued additive set function defined
on the Borel sigma-algebra of a compact topological space, then $\mu$ is 
countably additive. For a proof see \cite[III.5.13]{MR0117523}.

We remark that the regularity required by the Alexandroff theorem 
will in general not be satisfied. To show this we 
provide an example where $\mu$ 
can not be a probability measure. 	

Alternatively, we can study the Yosida-Hewitt decomposition \cite{MR0045194} of $\mu = \mu_{a}+\mu_{c}$.
The term $\mu_{a}$ corresponds to the purely finitely additive part and $\mu_{c}$ is the countably
additive part. This decomposition is constructed in such way that the pure finitely additive part
is characterized by the following property: if $\gamma$ is a non-negative countably additive 
measure such that $0\leqslant \gamma\leqslant \mu_{a}$, then $\gamma=0$. Note that $\mu_{c}\ll \nu$
and so by Radon-Nikodym theorem, there is some $h$ such that $d\mu= d\mu_{a}+h\nu$. Of course,
if $\mu_{c}=0$ then $h=0$. In what follows we present an example where $\mu_{c}=0$. 

\section{Manneville-Pomeau Maps and the Yosida-Hewitt Decomposition}\label{sec-Man-Pomeau}

Fix a real number $\alpha\geq 2$ and consider the dynamics given 
by a Manneville-Pomeau 
map $T\equiv T_{\alpha}:[0,1]\to [0,1]$ which is 
defined for each $x\in [0,1]$ by the following expression: 
$T(x)= x+x^{\alpha} \  (\mathrm{mod}\ 1)$.
%
Note that $T$ induces a continuous map on the one-dimensional torus $\mathbb{T}$, 
which will be denoted by $T:\mathbb{T}\to\mathbb{T}$. 
For any $x\in\mathbb{T}$ we have that $T'(x)= 1+\alpha x^{\alpha-1}$, where $T'$ denotes
the left derivative. 
The cover $\{[0,x_0(\alpha)], [x_0(\alpha),1] \}$ of $\mathbb{T}$ 
defines a Markov partition and we can prove that 
the Manneville-Pomeau maps we are considering here 
are semi-conjugated to the full-shift on two-symbol sequences 
\begin{displaymath}
	\xymatrixcolsep{4pc}\xymatrix{
		\{0,1\}^{\mathbb{N}} \ar[r]^{\sigma} \ar[d]_{\pi_{\alpha}}& 
		\{0,1\}^{\mathbb{N}} \ar[d]^{\pi_{\alpha}}   
		\\
		\mathbb{T} \ar[r]^{T}&  \mathbb{T}}
\end{displaymath}
that is, there is a surjective (not injective) map $\pi_{\alpha}:\{0,1\}^{\mathbb{N}}\to\mathbb{T}$
so that  $\pi_{\alpha}\circ\sigma = T\circ\pi_{\alpha}$. The semi-conjugacy $\pi_{\alpha}$
is 1 to 1 except for countably many points.

Consider the potential $g:\mathbb{T}\to \mathbb{R}$ given by 
$
g(x) \equiv -\log |T'(x)|.
$
In this setting the Ruelle operator 
$\mathcal{L}_{g}:C(\mathbb{T},\mathbb{R})\to C(\mathbb{T},\mathbb{R})$
is defined as follows 
\[
\mathcal{L}_{g}(\varphi)(x)
=
\sum_{y\in T^{-1}(x)}
\exp(g(y))\varphi(y).
\]

One can prove that the Lebesgue measure $\lambda$ is an eigenmeasure
associated to the spectral radius of $\mathcal{L}_{g}$, which
is in this case equal to one.  
Now we consider the continuous potential $f\equiv -\log T'\circ \pi_{\alpha}$ 
defined on the symbol space $\{0,1\}^{\mathbb{N}}$. 
It well-known that $\rho(\mathscr{L}_{f})=1$, 
and $\nu\circ \pi_{\alpha}^{-1}=\lambda$,  where $\mathscr{L}_{f}^{*}\nu=\nu$.
If we assume that the set function $\mu(A)=\xi_{f}(1_{A}\nu)$ is regular,
then $\mu$ is a shift-invariant probability measure 
satisfying $\mu \ll \nu$. 
This implies the existence of a $T$-invariant probability measure 
$\rho=\mu\circ\pi_{\alpha}^{-1}$ which is absolutely continuous with respect to
the Lebesgue measure. But for $\alpha\geq 2$ 
this is a contradiction with some known results about the 
non-existence of such measures, see \cite{MR1695915,MR1149493,MR599464}.
Actually, if we consider the Yosida-Hewitt decomposition of $\mu = \mu_{a}+\mu_{c}$,
a small variation of the previous argument implies that $\mu_{c} = 0$. Because
the way $\mu$ is constructed we believe that $F_{\mu_{a}}|_{C(X)}= \delta_{0}$, 
which is an equilibrium state for this system. 

This example shows that the hypotheses in Theorem \ref{teo-Rad-Nik-autofuncao} can 
not be weakened in general, that is, for some continuous potential $f$ it is possible 
that the set function $A\longmapsto \xi_{f}(1_{A}\nu)$ is not countably additive, 
but only a finitely additive measure.

Note that Alexandroff's Theorem \cite[III.5.13]{MR0117523} 
does not apply here because the inner and outer regularity can be broken. 
So in general the set function defined in Theorem \ref{teo-Rad-Nik-autofuncao} 
can really be only a finitely additive measure.

Note that our result implies a curious result about such Manneville-Pomeau 
maps. For these dynamical systems there is always 
at least one finitely additive $T$-invariant ``probability'' measure $\mu$
such that $\mu(Z)=0$, for all $Z\in\mathscr{B}([0,1])$ satisfying $\mathrm{Leb}(Z)=0$.  
It is not clear whether such objects can be used to 
get quantitative information on the asymptotic behavior of means of observables 
evaluated on typical orbits of the dynamical system. 

\medskip 

On the other hand, for $f$ in a dense subset of $C(X)$, there
is a positive eigenfunction $h_{f}\in C(X)$ for the Ruelle operator
$\mathscr{L}_{f}$ associated to $\rho(\mathscr{L}_{f})$. Therefore 
$\mathscr{L}_{f}^{**}J(h_f)= \rho(\mathscr{L}_{f})J(h_f)$, where $J$ is  
Jordan canonical mapping. By taking $\xi_f = J(h_f)$ one can see that 
\[
\int_{X}1_{A} h_f\, d\nu =\xi_{f}(1_{A}\nu),
\] 
thus proving that the regularity of the set function $A\longmapsto \xi_{f}(1_{A}\nu)$
is verified for potentials in a dense subset of $C(X)$.

\section{The Variational Problem}
\label{sec-prob-variacional}

Our next concern will be solving a generalization of the variational problem 
\eqref{variational-problem} using the theory developed here. To guide 
the discussion, we start by recalling the common strategy normally employed to
solve this problem when the state space is finite  
and the potential $f$ is H\"older. After
this discussion we turn our attention to general state spaces and 
continuous potentials.

Let us assume for a moment that $E=\{0,1,\ldots,d-1\}$ and 
$X=E^{\mathbb{N}}$. 
We recall that a probability measure 
$\mu\in\mathscr{M}_{\sigma}(X)$ 
is said to be an {\it equilibrium state}
for the potential $f$ if the supremum in \eqref{variational-problem} is attained
at $\mu$, i.e., 
\begin{align}\label{def-eq-state}
	h_{\mu}(\sigma) + \int_{X} f\, \text{d}\mu
	=
	\sup_{\nu\in \mathscr{M}_{\sigma}(X)}
	\{h_{\mu}(\sigma) + \int_{X} f\, \text{d}\nu\}.
\end{align}

In this finite-state space it is very well known 
how to construct an equilibrium state $\mu$ by means of the maximal eigendata of the Ruelle
operator. For example, if $f$ is a H\"older potential, then 
the classical Ruelle-Perron-Frobenius theorem implies the existence of 
$h_{f}$ and $\nu_{f}$, the maximal eigenfunction and eigenmeasure, 
of the Ruelle operator and its transpose, respectively. By taking a suitable
normalization one can prove that the probability measure $h_f\nu_{f}$ is 
the unique equilibrium state for $f$, see \cite{MR1085356}.

When working with possible uncountable state space $E$ 
and dynamics given by the left shift mapping, to avoid trivialities 
in the variational problem one needs to avoid using the Kolmogorov-Sinai entropy
in its formulation. To obtain a generalization of the finite-state 
space we adopt here the setting usually considered in Statistical Mechanics, 
see \cite{MR2807681,MR517873,MR1241537}.

If $\mu$ and $\nu$ are two arbitrary finite measures over $X$ and
$\mathscr{A}$ is a sub-$\sigma$-algebra of $\mathscr{B}(X)$ we 
define 
\[
\mathscr{H}_{\mathscr{A}}(\mu|\nu)
=
\begin{cases}
	\displaystyle\int_{X} 
	\frac{d\mu|_{\mathscr{A}}}{d\nu|_{\mathscr{A}}} 
	\log  \left(\frac{d\mu|_{\mathscr{A}}}{d\nu|_{\mathscr{A}}} \right)
	\, \text{d}\nu, &\ \text{if}\ \mu\ll\nu \ \text{on}\ \mathscr{A};
	\\[0.5cm]
	\infty,&\ \text{otherwise}.
\end{cases}
\]
The extended real number 
$\mathscr{H}_{\mathscr{A}}(\mu|\nu)$ is the negative of the {\it relative entropy} of $\mu$
with respect to $\nu$ on $\mathscr{A}$. 
Consider the product measure $\pmb{p}=\prod_{n\in\mathbb{N}} p$, 
where the probability measure $p$ still is the a priori measure 
used to construct the Ruelle operator. 
For each $\mu\in \mathscr{M}_{\sigma}(X)$ 
it is proved in \cite[Theorem 15.12]{MR2807681} that the following limits exist
\[
\mathtt{h}(\mu)\equiv 
-\lim_{n\to\infty} \frac{1}{n} \mathscr{H}_{\mathscr{B}(X)_n}(\mu|\pmb{p}),
\]
where $\mathscr{B}(X)_n$ is the $\sigma$-algebra generated by the 
projections $\{\pi_j:X\to E: 1\leq j\leq n\}$.
If $\#E=d$, we can show that $\mathtt{h}(\mu)+\log d=h_{\mu}(\sigma)$. Therefore
both entropies determine the same set of equilibrium states in the finite-alphabet setting.

As usual, we define the pressure of $f\in C(X)$ by 
\[
P(f)
\equiv 
\sup_{\mu\in \mathscr{M}_{\sigma}(X)}
\{\mathtt{h}(\mu) + \int_{X} f\, \text{d}\mu\}.
\]
If $f$ is an arbitrary continuous potential and 
$\rho(\mathscr{L}_{f})$ 
is the spectral radius of $\mathscr{L}_{f}$,
then we can show that $P(f)=\log\rho(\mathscr{L}_{f})$, 
see \cite{MR3377291} for H\"older potentials 
and \cite{CLS20} for continuous potentials.
If $f$ is a H\"older potential, it follows 
from one of the main results of \cite{MR3897924} that the probability measure 
$\mu$ given by 
$
\mathscr{B}(X) \ni  A
\mapsto 
\xi_{f}(1_{A}\nu)
$
has to be the unique equilibrium state for $f$. This fact follows 
from Theorem \ref{teo-medida-gibbs-invariante}, Theorem \ref{teo-Rad-Nik-autofuncao}
and the uniqueness of the equilibrium states for H\"older potentials.
If $f$ is a more general continuous potential we do not know what are necessary and sufficient conditions 
on the potential ensuring that the probability measure $\mu$ 
still is an equilibrium state for $f$.  
If $\xi_f$ does not belong to the image of the natural map, 
the set function  $\mu$  may neither be a probability measure nor a quasilocal measure. 
In that case the equilibrium measure does not necessarily satisfy the DLR conditions, and, using the   
Israel-Bishop-Phelps Theorem, many types of phase transitions and pathologies can be shown to occur,
for more details see \cite{MR0154092,MR556902,MR0459458,MR753071}.
At last, if Yosida-Hewitt decomposition of $\mu$ is not trivial, meaning $d\mu = d\mu_{a}+hd\nu$,
for some non-trivial $h$, we believe that $hd\nu$, up to a normalization, is an equilibrium state for $f$,
even if $f$ has low regularity properties. 
This could be obtained by generalizing the Rokhlin formula to this context.

\section{Concluding Remarks}\label{sec-conc-remarks}
A positive operator $T:L^{\infty}(\nu)\to L^{\infty}(\nu)$ satisfying 
$T(1_{X})=1_{X}$ is called a Markov operator. 
We can show that $\mathbb{L}^{*}_{(f-\log\rho(\mathscr{L}_{f}))}$ is a Markov operator. 
And so we can use Theorem 18.4 in \cite{MR2378491} to obtain a similar result 
as our Theorem \ref{teo-autofunc-bidual}. 
The difference of this approach compared with ours is that it 
uses the Brouwer-Schauder-Tychonoff fixed point theorem, while in our proof 
we used the Banach-Alaoglu theorem. 
Although both proofs are based on a compactness argument, 
our proof is more constructive because the eigenvector 
is obtained by means of an explicit sequence $(\xi_n)_{n\in\mathbb{N}}$.

As mentioned before, the maximal eigenfunctions of the Ruelle operator 
are useful in getting information on equilibrium states, ergodic optimization and 
large deviations. Here we study them in a weaker sense.  
We proved, under appropriate conditions, that the weak solutions (bidual solution) 
can be ``regularized'' to classic solutions (integrable functions). 
Of course, to exploit all the benefits of the previous results in Thermodynamic Formalism, 
one would like to take this regularization proceeding from 
\[
L^1(\nu)^{**} \to L^1(\nu) \to L^2(\nu) \to \ldots\to L^p(\nu) \to L^{\infty}(\nu) \to C(X).
\] 

To know what are all the continuous potentials for which the bidual solutions,
for the maximal eigenvalue problem, can be embedded in $C(X)$, 
beyond being an interesting mathematical problem,  would have 
several applications. 
In particular, some examples where this regularization is not 
possible from $L^1(\nu)^{**}$ to $C(X)$ 
involves potentials where we do have some sort of phase transition. 
It is not clear whether there is a connection between such obstructions and 
phase transition phenomena.  Of course, if $f$ is either a H\"older or Walters potential 
then it is possible to go all the way from the bidual space $L^1(\nu)^{**}$ to $C(X)$.

\section*{Acknowledgments}
L. Cioletti and R. Ruviaro were financed in part by the Coordena\c c\~ao de 
Aperfei\c coamento de Pessoal de N\'ivel Superior - Brasil (CAPES) - Finance Code 001.
L. Cioletti and R. Ruviaro acknowledges the CNPq through project PQ 310818/2015-0, 
DPP and FAP-DF for the financial support. 

We also thank Evgeny Verbitskiy for pointing out an error in a previous version of 
Theorem \ref{teo-Rad-Nik-autofuncao}
and for bringing to our attention 
the example in Section \ref{sec-Man-Pomeau}.

\bibliographystyle{plain}
\bibliography{references}

\end{document}